\theoremstyle{definition}
\newtheorem{definition}{Definition}
\newtheorem{theorem}{Theorem}
\newtheorem{remark}{Remark}
\newtheorem{example}{Example}
\newtheorem{question}{Question}
\renewenvironment{abstract}{%
    \if@twocolumn
      \section*{\abstractname}%
    \else 
      \begin{center}%
        {\bfseries \Large\abstractname\vspace{\z@}}
      \end{center}%
      \quotation
    \fi}
    {\if@twocolumn\else\endquotation\fi}
\title{\textbf{Reconstructing Rooted Trees From Their Strict Order Quasisymmetric Functions}}
\author{\textbf{Jeremy Zhou}\thanks{jzhou21@andover.edu}}
\date{}
\begin{document}

\maketitle

\begin{abstract}
\noindent
Determining whether two graphs are isomorphic is an important and difficult problem in graph theory. One way to make progress towards this problem is by finding and studying graph invariants that distinguish large classes of graphs. Stanley conjectured that his chromatic symmetric function distinguishes all trees, which has remained unresolved. Recently, Hasebe and Tsujie introduced an analogue of Stanley's function for posets, called the strict order quasisymmetric function, and proved that it distinguishes all rooted trees. In this paper, we devise a procedure to explicitly reconstruct a rooted tree from its strict order quasisymmetric function by sampling a finite number of terms. The procedure not only provides a combinatorial proof of the result of Hasebe and Tsujie, but also tracks down the representative terms of each rooted tree that distinguish it from other rooted trees.
\end{abstract}

\textbf{Keywords:} chromatic symmetric function, $(P, \omega)$-partition, sampling function, algorithmic reconstruction

\pagebreak

\tableofcontents

\pagebreak

\section{Introduction} \label{introduction}

Determining whether two graphs are isomorphic is a very important and difficult problem in graph theory \cite{kobler2012graph}. For instance, in the field of computer vision, graphs can be used to encode visual information, and knowing whether two graphs are isomorphic is crucial for recognizing visual patterns \cite{tonioni2017product}.

To better understand when two graphs could be isomorphic, graph invariants are a useful tool. A \textbf{graph invariant} is a function on graphs that maps any two isomorphic graphs to the same image. Graph invariants can take values in any set, but in this paper all graph invariants will take values that are polynomials or formal power series. For a set of graphs $S$, a graph invariant \textbf{distinguishes elements of $S$} if any two graphs in $S$ mapping to the same image are isomorphic. The existence of such a graph invariant would reduce the graph isomorphism problem for elements of $S$ to calculating the value of the invariant.

One of the most well-known graph invariants is the \textbf{chromatic polynomial} $\chi_G(x)$. It was defined by Birkhoff as the unique polynomial such that $\chi_G(n)$ is the number of ways to properly color $G$ with $n$ colors \cite{birkhoff1912determinant}. The chromatic polynomial is not powerful enough to distinguish every graph, however, because there are many examples of pairs of graphs with the same chromatic polynomial. In particular, all trees with a fixed number of vertices have the same chromatic polynomial: a tree $T$ with $d$ vertices has chromatic polynomial $\chi_T(x) = x(x - 1)^{d - 1}$.

Stanley defined a generalization of the chromatic polynomial, which he named the \textbf{chromatic symmetric function} $X_G(\mathbf{x})$ for $\mathbf{x}$ an infinite tuple of variables $(x_1, x_2, \dots)$ \cite{stanley1995symmetric}:
$$X_G(\mathbf{x}) = \sum_{f \colon V(G) \to \mathbb{Z}^+} \mathbf{x}_f,$$
where $\mathbf{x}_f = \prod_{v \in V(G)} x_{f(v)}$.

Because the chromatic symmetric function has infinitely many variables, it is no surprise that the chromatic symmetric function is in general better than the chromatic polynomial at telling apart graphs. However, the chromatic symmetric function does not distinguish all graphs; as Stanley notes, the following two graphs have the same chromatic symmetric function \cite{stanley1995symmetric}.

\begin{figure}[h]
    \centering
    \begin{minipage}{0.45\textwidth}
        \centering
        \includegraphics{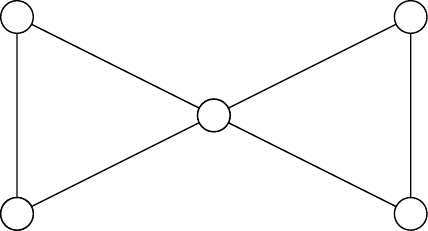}
        \caption{The bowtie graph.}
    \end{minipage}\hfill
    \begin{minipage}{0.45\textwidth}
        \centering
        \includegraphics{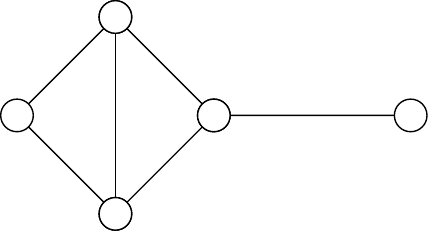}
        \caption{The dart graph.}
    \end{minipage}
\end{figure}

Stanley posed the following question \cite{stanley1995symmetric}:

\begin{question} \label{stanley}
Does the chromatic symmetric function distinguish all trees?
\end{question}

In the years since, significant strides have been made towards a solution.

One approach is to connect the chromatic symmetric function with other invariants. Martin, Morin, and Wagner showed that the chromatic symmetric function is a stronger invariant than the subtree polynomial \cite{martin2008distinguishing}, which was shown by Eisenstat and Gordon to distinguish spiders and some caterpillars \cite{eisenstat2006non}. Aliste-Prieto and Zamora connected the chromatic symmetric function on proper caterpillars to the $\mathscr{L}$-polynomial on integer compositions, allowing them to show that the function distinguishes proper caterpillars \cite{aliste2014proper}.

Another approach is to create a recurrence for the chromatic symmetric function, allowing results to be proven recursively. Gebhard and Sagan generalized the chromatic symmetric function to noncommutative variables, allowing them to use a deletion-contraction relation to prove generalizations of some results of Stanley \cite{gebhard2001chromatic}. Orellana and Scott demonstrated a three-term recurrence relation for chromatic symmetric functions \cite{orellana2014graphs}.

A third approach is to devise a procedure that can reconstruct enough data from the chromatic symmetric function to distinguish certain classes of trees. Loebl and Sereni devised a procedure showing that the chromatic symmetric function distinguishes all caterpillars \cite{loebl2018isomorphism}.

However, Question \ref{stanley} remains unsolved and is actively being researched. Recent results include the following. Heil and Ji computationally verified Question \ref{stanley} in the affirmative up to $29$ vertices \cite{heil2019algorithm}. Huryn determined that the chromatic symmetric function distinguishes $2$-spiders \cite{huryn2020few}. Crew and Spirkl generalized the chromatic symmetric function and the related Tutte symmetric function to vertex-weighted trees, allowing them to use a deletion-contraction relation to prove various new results about both invariants \cite{crew2020deletion, crew2020vertex}.

The chromatic symmetric function is also studied for its connection to knot theory. Noble and Welsh found that their $W$-polynomial, which was originally developed for its connection with Vassiliev invariants of knots, is equivalent to the chromatic symmetric function for trees \cite{noble1999weighted}.

Furthermore, the chromatic symmetric function is studied for its connections to representation theory. Another commonly studied conjecture regarding the chromatic symmetric function is the Stanley-Stembridge conjecture (the $e$-positivity conjecture) \cite{stanley1995symmetric}, which was originally related to immanants.

Shareshian and Wachs generalized the chromatic symmetric function to their \textbf{chromatic quasisymmetric function} for labeled graphs, through which they find a connection with Hessenberg varieties \cite{shareshian2016chromatic}. This allowed them to approach the Stanley-Stembridge conjecture from the angle of representation theory. Harada and Precup developed this connection further by considering a graded version of the conjecture, inspired by the gradation of the cohomology ring of Hessenberg varieties \cite{harada2019cohomology}. In addition, Ellzey generalized the chromatic quasisymmetric function to directed graphs \cite{ellzey2017directed}.

Now, we introduce a particular invariant that we study in this paper. Hasebe and Tsujie defined an analogue of Stanley's chromatic symmetric function for posets, which they call the \textbf{strict order quasisymmetric function} $\Gamma^<(P; \mathbf{x})$ \cite{hasebe2017order}. It is defined as follows:
$$\Gamma^<(P; \mathbf{x}) = \sum_{\substack{f \colon V(P) \to \mathbb{Z}^+ \\ f \textrm{ increasing}}} \mathbf{x}_f.$$

In addition to being a direct analogue of the chromatic symmetric function, the strict order quasisymmetric function is a specialization of the chromatic quasisymmetric function defined by Ellzey \cite{ellzey2017directed}, achieved by taking only the terms with the maximal powers of $t$.

The strict order quasisymmetric function is also a specialization of the \textbf{$(P, \omega)$-partition generating function}. Stanley introduced $(P, \omega)$-partitions for labeled posets $(P, \omega)$ as a way to combine many disparate fields of combinatorics, including as a generalization of graph colorings and skew diagrams \cite{stanley1972ordered}. The $(P, \omega)$-partition generating function was further studied by Gessel \cite{gessel1984multipartite}, as well as McNamara and Ward, who demonstrated necessary conditions and separate sufficient conditions under which two labeled posets the same $(P, \omega)$-partition generating function \cite{mcnamara2014equality}. These functions were also studied by Liu and Weselcouch for their connection to the Hopf algebra of posets \cite{liu2019p}, as well as their expansion in the type 1 quasisymmetric power sum basis \cite{liu2020p}. In the latter paper, Liu and Weselcouch proved that the $(P, \omega)$-partition generating function distinguishes series-parallel posets. The $(P, \omega)$-partition generating function reduces to the strict order quasisymmetric function if $P$ is naturally labeled.

Hasebe and Tsujie proved with algebraic methods that the strict order quasisymmetric function distinguishes all rooted trees, considered as posets \cite{hasebe2017order}. Furthermore, Tsujie used a similar method to prove that the chromatic symmetric function distinguishes trivially perfect graphs \cite{tsujie2018chromatic}.

The strict order quasisymmetric function has an infinite number of terms; for computational applications, we may only be able to sample one term at a time. Because the method of Hasebe and Tsujie relies on unique factorization, it does not provide a way to distinguish rooted trees by sampling a finite number of terms from their strict order quasisymmetric functions. Thus, it is of interest to study what exactly \textit{can} be determined about a rooted tree by sampling a finite number of terms from its strict order quasisymmetric function.

Cai, Slettnes, and the author work on this question by introducing a construction that they term \textbf{introducing gaps} \cite{cai2020combinatorial}. This construction takes two positive integers and produces a coloring of the rooted tree. By sampling the strict order quasisymmetric function for the terms associated with the colorings that result from the construction, they are able to reconstruct partial information about the tree.

In our paper, we set up a new framework for the introducing gaps construction, which allows us to recursively extend the construction in a precise manner and to an arbitrary finite degree. Our extended construction takes any multiset of vertices of the rooted tree and produces a coloring of the rooted tree. This construction is broad enough that we can designate \textit{every} coloring of the rooted tree as the result of the construction for some multiset of vertices. We then show how to sample terms to systematically determine information about certain vertices. Through a careful recursive combinatorial argument, we are able to reconstruct complete information about the rooted tree in a finite number of samples.

\begin{theorem} \label{fin}
Any rooted tree can be reconstructed by sampling a finite number of terms from its strict order quasisymmetric function.
\end{theorem}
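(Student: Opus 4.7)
The plan is to proceed constructively, using the extended introducing gaps framework alluded to in the introduction, and then to pair it with a recursive "peeling" of the rooted tree from the root downward. I would begin by giving a precise definition of the extended construction: for a rooted tree $T$ and a multiset $M$ of vertices of $T$, introducing gaps produces a canonical coloring $f_M \colon V(T) \to \mathbb{Z}^+$ that is strictly increasing along the root-to-leaf order. The first task is to establish the two structural properties foreshadowed above: (i) $f_M$ is always a legitimate strict order coloring, and (ii) every strict order coloring of $T$ equals $f_M$ for some multiset $M$. Together these give a surjection from multisets of vertices onto the set of strict order colorings, which translates the informational content of $\Gamma^<(T;\mathbf{x})$ into a combinatorial object indexed by multisets.

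Next I would set up a sampling function that, given a candidate multiset $M$, reads off the coefficient of the monomial $\mathbf{x}_{f_M}$ in $\Gamma^<(T;\mathbf{x})$. Because many multisets may produce the same monomial, the essential quantitative content is the fiber size: the number of multisets $M'$ with $f_{M'} = f_M$. I would argue that these fiber sizes, read off from finitely many coefficients, encode local structural data about $T$ near the vertices appearing in $M$. Concretely, for a fixed vertex $v$ and integer parameters chosen to isolate the effect of $v$, the coefficient of the corresponding monomial can be expressed as a sum over multisets compatible with $v$'s subtree, and inclusion-exclusion against already-known quantities extracts the number of children of $v$, the multiplicities of isomorphism types among its subtrees, and so on.

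The reconstruction itself proceeds by induction on depth. The base case recovers the root and its immediate children from a small family of low-complexity multisets. For the inductive step, assuming the first $k$ levels of $T$ are known together with the isomorphism types of all proper subtrees above a certain depth, I would choose multisets of vertices whose associated colorings single out the level-$(k+1)$ descendants of a specified vertex, and recover their subtree isomorphism types via the fiber-count argument above. Since $T$ has finitely many vertices and finitely many subtree isomorphism types, only finitely many samples are consumed before every vertex is accounted for and the recursion terminates with the full rooted tree.

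The main obstacle I anticipate is exactly the inclusion-exclusion at the inductive step: the coefficient of a targeted monomial receives contributions from every multiset that happens to produce the same coloring, including many that involve parts of the tree one is not currently trying to probe. Making this precise requires a careful choice of the "witness" multisets so that the contributions from irrelevant subtrees either vanish or can be subtracted using information already recovered at earlier stages of the recursion. Handling this bookkeeping cleanly — in particular, ensuring that the peeling order never requires information that has not yet been reconstructed — is where the combinatorial work of the proof will concentrate.
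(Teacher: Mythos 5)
Your setup — identifying every increasing coloring with a multiset of "gap" vertices via an extended introducing-gaps map, and then recursing on depth — matches the paper's framework (and note the paper gets more: the multiset $S$ is \emph{unique} for each coloring, which matters downstream). But from that point you propose a genuinely different, and unresolved, extraction mechanism. You want the sampling function to return the \emph{coefficient} of a chosen monomial $\mathbf{x}_{f_M}$ and then to do inclusion--exclusion over the fiber. The paper never looks at a coefficient. Its sampling function $F$ takes a partial monomial $\prod_{i\le n}x_i^{e_i}$ and returns the lexicographically greatest \emph{monomial} in $\Gamma^<(T;\mathbf{x})$ matching that prefix; the heavy lifting is done by the observation (Theorem~\ref{formula}) that $\mathbf{x}_{f_S}=\sigma\bigl(\mathbf{x}_h\,\tau\bigl(\prod_{g\in S}\sigma^{h_S(g)}(\mathbf{x}_h|_g)\bigr)\bigr)$, so that maximizing $\mathbf{x}_{f_S}$ under a prefix constraint is the same as \emph{minimizing} $\prod_{g\in S}\sigma^{h_S(g)}(\mathbf{x}_h|_g)$, which forces $S$ to pick out the vertices with smallest coheight profiles. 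That ordering trick is what isolates one subtree at a time, and it has no analogue in your plan. A smaller but real wrinkle: since the multiset-to-coloring map is a bijection, the fiber you describe ($\{M': f_{M'}=f_M\}$) is always a singleton; what you mean is the fiber over the monomial $\mathbf{x}_{f_M}$, i.e.\ the count of colorings with the same profile, and you should say so, because it is that count (not the set of multisets) that the coefficient records.

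The substantive gap is exactly the one you flag at the end and then leave open: making the inclusion--exclusion "bookkeeping" work. That is not a loose end — it is the entire technical content of the proof, and it is not clear it goes through in the coefficient picture. The paper meets the analogous obstruction head-on (it calls it the "swapping problem": when you force $S$ to contain certain vertices via a prefix condition, the minimizing $S$ may trade a non-minimal low-level gap for a much cheaper set of deeper gaps, so the greedy choice is wrong) and resolves it with the "predict and verify" strategy of Section~\ref{framework2}: it orders candidates by their stack partial products, proves an $m_{g_m}\ge m$ monotonicity lemma to show a "nice $m$" exists, and then tests one prediction at a time, recovering one vertex's position per query. Your sketch has no counterpart to this; "carefully chosen witness multisets so that contributions from irrelevant subtrees vanish or can be subtracted" is precisely the claim that needs proof, and in the coefficient-counting formulation you would additionally have to control how many colorings share each profile — a quantity the paper never needs. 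As written, the proposal identifies the right starting framework but does not contain an argument for the step where all the difficulty lives.
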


Note that the result of Hasebe and Tsujie states: given the strict order quasisymmetric function of a rooted tree, there exists exactly one rooted tree corresponding to it \cite{hasebe2017order}. In contrast, our result explicitly reconstructs the corresponding rooted tree via sampling a finite number of terms from the given strict order quasisymmetric function.

This procedure provides a combinatorial proof that the strict order quasisymmetric function distinguishes rooted trees. The strict order quasisymmetric function has been studied in terms of its expansion in the monomial basis \cite{hasebe2017order}, the fundamental basis \cite{liu2020p}, and the power sum basis \cite{liu2019p}. However, the function has not been studied using the terms themselves.

A benefit of analyzing the strict order quasisymmetric function in this manner is that combinatorial techniques require a lesser depth of knowledge to understand than algebraic techniques. This gives mathematicians who are less experienced with (quasi)symmetric functions, as well as algebraic combinatorics in general, the ability to contribute to current and relevant research.

In addition, the finite collection of terms that are sampled during this procedure can serve as a finite representative collection of terms for each rooted tree, which distinguish it from other rooted trees. Because these representative collections are finite, they can be directly compared to distinguish two rooted trees in a way that is computationally feasible.

In Section \ref{background} of this paper, we go over definitions and notations. Then, in Section \ref{example}, we provide an example of our procedure in action. In Section \ref{framework}, we set up the framework for our procedure, and in Sections \ref{main} through \ref{main2}, we prove our main result. Finally, in Section \ref{future}, we state some future directions for this project.

\section{Background and notation} \label{background}

We begin by going over definitions and notations. Some are taken from \cite{hasebe2017order} and \cite{cai2020combinatorial}, though importantly, we change the profile notation from the latter to make it easier to work with.

We notate multisets such that $\{v^e\}$ represents that the element $v$ appears $e$ times in the multiset.

\subsection{Tree-statistics}

For a rooted tree $T$, we use the symbol $v_T$ to denote its root. For a vertex $v \in V(T)$, we denote the subtree induced by $v$ as $S_v$.

\begin{figure}[h]
\centering
\includegraphics{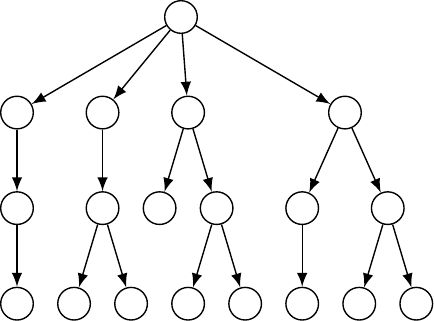}
\caption{A rooted tree.}
\label{fig:tree}
\end{figure}

\begin{definition}
A \textbf{tree-statistic} is a function $a: V(T) \to A$ for some set $A$. We write $a_v$ for the image of $v$ under $a$.
\end{definition}

In this paper, the main tree-statistic that we consider is the coheight.

\begin{definition}
The \textbf{coheight} is a function $h: V(T) \to \mathbb{N}$ defined such that $h_v$ is the length of the unique path from $v_T$ to $v$.
\end{definition}

\begin{definition}
For $n \in \mathbb{N}$, \textbf{layer $n + 1$ of $T$} is the set of vertices of $T$ with coheight $n$. We say that $T$ has \textbf{$N$ layers} if it has $N$ nonempty layers.
\end{definition}

\subsection{Profiles}

For a set of indeterminates $\{ x_i \}_{i \in A}$ indexed by a set $A$, we denote by $\langle x_i \rangle_{i \in A}$ the multiplicative group generated by $\{ x_i \}_{i \in A}$.

\begin{definition}
Let $a \colon V(T) \to A$ be a tree-statistic, and let $\{ x_i \}_{i \in A}$ be a set of indeterminates indexed by $A$. The \textbf{$a$ profile}, denoted $\mathbf{x}_a$, is

$$\mathbf{x}_a = \prod_{v \in V(T)} x_{a_v}.$$

For $v \in V(T)$, the \textbf{$a$ profile of $v$}, denoted $\mathbf{x}_a |_v$, is the $a$ profile of $S_v$:

$$\mathbf{x}_a |_v = \prod_{u \in V(S_v)} x_{a_u}.$$
\end{definition}

For example, we could talk about the \textbf{coheight profile}, denoted $\mathbf{x}_h$, or the \textbf{coheight profile of a vertex $v$}, denoted $\mathbf{x}_h |_v$.

Profiles can also be considered tree-statistics: given a tree-statistic $a: V(T) \to A$, then we can let $\mathbf{x}_a: V(T) \to \langle x_i \rangle_{i \in A}$ be the tree-statistic such that the image of $v$ under $\mathbf{x}_a$ is $\mathbf{x}_a |_v$. Thus, we can nest profiles. For instance, we could consider the \textbf{coheight profile profile} $\mathbf{x}_{\mathbf{x}_h}$.

\begin{example}
For the tree depicted in Figure \ref{fig:tree}, the coheight profile $\mathbf{x}_h$ is ${x_0}^1 {x_1}^4 {x_2}^6 {x_3}^8$, and the coheight profile profile $\mathbf{x}_{\mathbf{x}_h}$ is
$$x_{{x_0}^1 {x_1}^4 {x_2}^6 {x_3}^8} \cdot x_{{x_1}^1 {x_2}^1 {x_3}^1} \cdot x_{{x_1}^1 {x_2}^1 {x_3}^2} \cdot x_{{x_1}^1 {x_2}^2 {x_3}^2} \cdot x_{{x_1}^1 {x_2}^2 {x_3}^3} \cdot x_{{x_2}^1} \cdot x_{{x_2}^1 {x_3}^1} \cdot x_{{x_2}^1 {x_3}^1} \cdots$$
\end{example}

We will eventually want to nest $\mathbf{x}_{\cdots \mathbf{x}_h}$ an arbitrarily large number of times, so we introduce the following notation. For $N \in \mathbb{Z}^+$, let $\mathbf{x}_{(N)h} = \mathbf{x}_{\cdots \mathbf{x}_h}$ nested $N$ times. For example, $\mathbf{x}_{(2)h} = \mathbf{x}_{\mathbf{x}_h}$.

Note that in \cite{cai2020combinatorial}, profiles were defined as multisets and denoted $P^a_T$ and $P^a_v$, which we have replaced with $\mathbf{x}_a$ and $\mathbf{x}_a |_v$, respectively. The definitions contain the same information, but our definition of profile allows us to combine profiles neatly into formal power series. In fact, we will demonstrate that the strict order quasisymmetric function is such a formal power series.

\subsection{Working with formal power series}

Denote by $\mathbb{Z}[[x_i]]_{i \in A}$ the set of formal power series in $\{x_i\}_{i \in A}$ with coefficients in $\mathbb{Z}$.

Let $A$ be a well-ordered set, and let $(x_i)_{i \in A}$ be a sequence of indeterminates indexed by $A$. We can impose a well-order on the set $\langle x_i \rangle_{i \in A}$ by considering each term $\prod_{i \in A} x_i^{e_i}$ ($e_i \in \mathbb{N}$) as the tuple $(e_i)_{i \in A}$ and ordering them lexicographically.

For some formal power series $p \in \mathbb{Z}[[x_i]]_{i \in A}$, we let $\max^m(p)$ be the $m$th greatest term of $p$ under the above ordering. For instance, $\max^2(2x_1 + x_2) = x_1$. Similarly, we let $\min^m(p)$ be the $m$th least term of $p$.

In this paper, we often use formal power series that collect together a set of coheight profiles. For instance, let us fix an $n \in \mathbb{N}$ and construct the formal power series
$$\sum_{\substack{v \in V(T) \\ h_v = n}} \mathbf{x}_h |_v.$$
Since coheight profiles are monomials, we can talk about the term $\min^1(p)$, which is the least coheight profile out of all the coheight profiles of the vertices with coheight $n$.

For $p \in \mathbb{Z}[[x_i]]_{i \in A}$ and $n \in A$, we let
$$\left[ \prod_{i \le n} x_i^{e'_i} \right] p$$
be the formal power series consisting of the terms $\prod_{i \in A} x_i^{e_i}$ in $p$ such that $e_i = e'_i$ for all $i \le n$. For example, if
$$p = x_2 + 2 x_1 x_2 + 3 x_1^2 x_2,$$
then $[x_1] p = 2 x_1 x_2$ and $[x_2] p = x_2$ (since we require that the exponent of $x_1$ is $0$).

\subsection{The strict order quasisymmetric function}

\begin{definition}
A \textbf{coloring} of a rooted tree $T$ is a function $f \colon V(T) \to \mathbb{Z}^+$.

The coloring $f$ is \textbf{increasing} if $f(u) < f(v)$ for every vertex pair $(u, v)$ such that $u$ is a parent of $v$.
\end{definition}

Notice that a coloring $f$ can also be considered a tree-statistic with a slight abuse of notation: $f_v = f(v)$. Thus, we can consider the $f$ profile $\mathbf{x}_f$. See Figure \ref{fig:ex} for an example.

\begin{definition}
The \textbf{strict order quasisymmetric function} of a rooted tree $T$ is the series
$$\Gamma^<(T; \mathbf{x}) = \sum_{\substack{f \colon V(T) \to \mathbb{Z}^+ \\ f \textrm{ increasing}}} \mathbf{x}_f.$$
\end{definition}

\begin{figure}[h]
\centering
\includegraphics{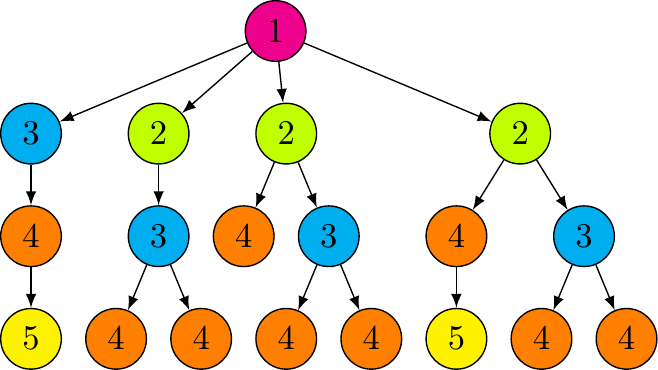}
\caption{A coloring $f$ with $\mathbf{x}_f = {x_1}^1 {x_2}^3 {x_3}^4 {x_4}^9 {x_5}^2$.}
\label{fig:ex}
\end{figure}

\subsection{The sampling function}

In order to work with the strict order quasisymmetric function practically, we need a way of sampling and working with only a finite number of its terms. Thus, we formally introduce the notion of a sampling function.

We denote the set of terms in $\Gamma^<(T; \mathbf{x})$ by $\Gamma^<(T)$.

\begin{definition} \label{sampling}

A \textbf{sampling function of $\Gamma^<(T; \mathbf{x})$} is a function $F \colon S \to \Gamma^<(T) \cup \{ \varnothing \}$, where $S$ is a set.
\end{definition}

This sampling function indexes all the terms in $\Gamma^<(T)$, allowing us to isolate specific terms. With the aid of a sampling function, we can work with a finite number of terms at a time.

In this paper, we use the sampling function $F \colon \langle x_i \rangle_{i \in \mathbb{Z}^+} \to \Gamma^<(T) \cup \{ \varnothing \}$ defined by

$$F \left( \prod_{i \le n} x_i^{e_i} \right) = {\max}^1 \left( \left[ \prod_{i \le n} x_i^{e_i} \right] \Gamma^<(T; \mathbf{x}) \right).$$

Hereafter, we refer to $F$ as \textit{the} sampling function.

We choose this particular sampling function for the purpose of reconstructing a rooted tree using our method. In the following sections, we will elaborate on exactly how it is used.

\section{A reconstruction example} \label{example}

Before proceeding with the formal framework of this paper, we give an example of the reconstruction procedure in action.

\begin{figure}[h]
\centering
\includegraphics{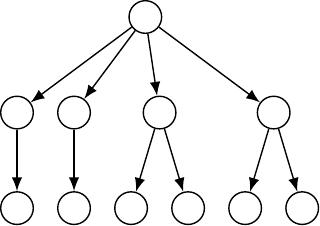}
\caption{A tree $T$.}
\label{fig:extree}
\end{figure}

We begin with the sampling function of $\Gamma^<(T; \mathbf{x})$ for the tree $T$ depicted in Figure \ref{fig:extree}.
Some of the terms of $\Gamma^<(T; \mathbf{x})$ are the following:
$$\Gamma^<(T; \mathbf{x}) = {x_1}^1{x_2}^4{x_3}^6 + 6{x_1}^1{x_2}^4{x_3}^5{x_4}^1 + 6{x_1}^1{x_2}^4{x_3}^5{x_4}^0{x_5}^1 + \cdots + 15{x_1}^1{x_2}^4{x_3}^4{x_4}^2 + \cdots.$$

We will reconstruct the tree $T$ in two steps, accessing a total of five terms.

\subsection{Step 1} \label{step1}

The first step of the reconstruction is to determine the term of $\Gamma^<(T; \mathbf{x})$ with the lexicographically greatest tuple of exponents, which in our notation is $\max^1(\Gamma^<(T; \mathbf{x}))$.

By evaluating the sampling function $F$ at $1$, we determine that $\max^1(\Gamma^<(T; \mathbf{x})) = {x_1}^1{x_2}^4{x_3}^6$. Figure \ref{fig:col1} depicts the coloring of $T$ to which this term corresponds.

\begin{figure}[h]
\centering
\includegraphics{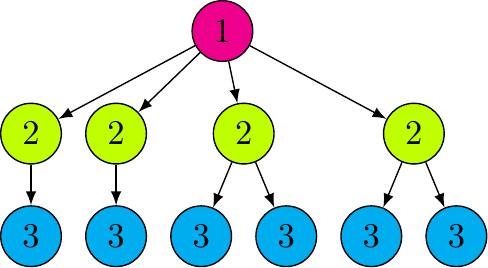}
\caption{The coloring of $T$ corresponding to the term ${x_1}^1{x_2}^4{x_3}^6$.}
\label{fig:col1}
\end{figure}

We will show in Theorem \ref{one} that from the term ${x_1}^1{x_2}^4{x_3}^6$, we know that the coheight profile of $T$ is ${x_0}^1{x_1}^4{x_2}^6$. Thus, we know that the root of $T$ has 4 children and 6 grandchildren. This knowledge is depicted in Figure \ref{fig:stage1}.

\begin{figure}[h]
\centering
\includegraphics{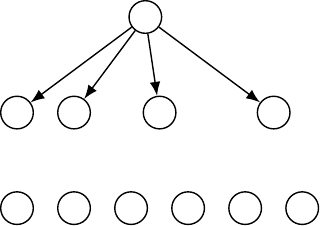}
\caption{The result of step 1 of the reconstruction.}
\label{fig:stage1}
\end{figure}

\subsection{Step 2} \label{step2}

The second step of the reconstruction involves perturbing the first step.
Recall that from the first step, we had $\max^1(\Gamma^<(T; \mathbf{x})) = {x_1}^1{x_2}^4{x_3}^6$. Notice that if we remove all terms except those containing ${x_1}^1{x_2}^4$, the above term would still be the maximum; in our notation, $\max^1([{x_1}^1{x_2}^4] \Gamma^<(T; \mathbf{x})) = {x_1}^1{x_2}^4{x_3}^6$.

However, what happens if we reduce the exponent of $x_2$ to $3$? Let us look at the term $\max^1([{x_1}^1{x_2}^{3}] \Gamma^<(T; \mathbf{x}))$ instead.

By evaluating $F$ at ${x_1}^1{x_2}^3$, we determine that $\max^1([{x_1}^1{x_2}^3] \Gamma^<(T; \mathbf{x})) = {x_1}^1{x_2}^3{x_3}^6{x_4}^1$. Figure \ref{fig:colgap} depicts the two colorings of $T$ to which this term corresponds.

\begin{figure}[h!]
\centering
\includegraphics{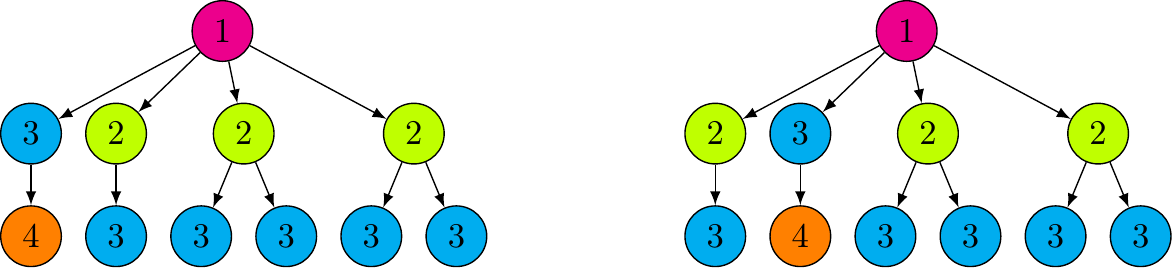}
\caption{The two colorings of $T$ corresponding to the term ${x_1}^1{x_2}^3{x_3}^6{x_4}^1$.}
\label{fig:colgap}
\end{figure}

One can think of this coloring as being similar to the coloring in Figure \ref{fig:col1}, except that the ${x_2}^3$ condition forces there to be a ``gap'' in the number $2$ that is instead filled with a number $3$. Let us call the vertex at which this ``gap'' occurs $g$. We will show in Theorem \ref{two} that by comparing the term ${x_1}^1{x_2}^3{x_3}^6{x_4}^1$ to the term ${x_1}^1{x_2}^4{x_3}^6$ from before, we can determine that the coheight profile of $g$ is ${x_1}^1{x_2}^1$, so $g$ has exactly 1 child.

Information about $g$'s grandchildren, etc.\ can also be deduced in larger cases.
Figure \ref{fig:stage2} summarizes what we now know about $T$.

\begin{figure}[h!]
\centering
\includegraphics{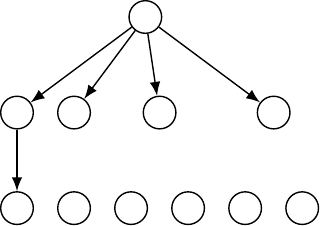}
\caption{The result of the first part of step 2 of the reconstruction.}
\label{fig:stage2}
\end{figure}

We continue the second step of the reconstruction by reducing the exponent of $x_2$ to $2$. Evaluating $F$ at ${x_1}^1{x_2}^2$ gives us the term $\max^1([{x_1}^1{x_2}^2] \Gamma^<(T; \mathbf{x})) = {x_1}^1{x_2}^2{x_3}^6{x_4}^2$. Figure \ref{fig:colgap2} depicts the coloring of $T$ to which this term corresponds.

\begin{figure}[h!]
\centering
\includegraphics{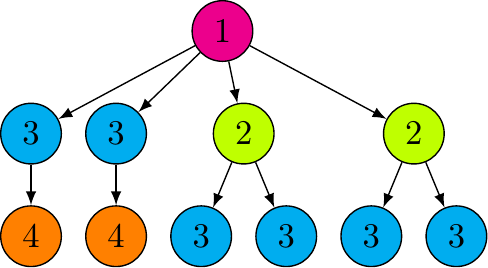}
\caption{The coloring of $T$ corresponding to the term ${x_1}^1{x_2}^2{x_3}^6{x_4}^2$.}
\label{fig:colgap2}
\end{figure}

Let us call the second gap $g'$. By Theorem \ref{two} again, we can compare ${x_1}^1{x_2}^2{x_3}^6{x_4}^2$ to ${x_1}^1{x_2}^4{x_3}^6$ to determine that the product of the coheight profiles of $g$ and $g'$ is ${x_1}^2{x_2}^2$, so $g$ and $g'$ have a total of 2 children. Thus, $g'$ has one child. Figure \ref{fig:stage2.2} summarizes what we now know about $T$.

\begin{figure}[h!]
\centering
\includegraphics{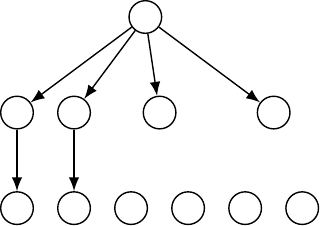}
\caption{The result of the second part of step 2 of the reconstruction.}
\label{fig:stage2.2}
\end{figure}

We can continue this process, determining $\max^1([{x_1}^1{x_2}^1] \Gamma^<(T; \mathbf{x}))$ and $\max^1([{x_1}^1{x_2}^0] \Gamma^<(T; \mathbf{x}))$ in order to reconstruct the number of children of the other vertices in layer $2$. After this, we have the entire tree.

If $T$ is a rooted tree with four or more layers, steps three and up of the reconstruction are mostly analogous. For example, suppose that one of the terms we isolated in step two is ${x_1}^1{x_2}^3{x_3}^6{x_4}^8{x_5}^2$. Considering this, in step three, we might impose the condition $[{x_1}^1{x_2}^3{x_3}^5]$. In total, reconstructing a rooted tree with $n$ layers would require $n - 1$ steps.

\section{A framework for colorings} \label{framework}

What follows is the framework for our main result.

We provide notes explaining how our notation connects to the example in Section \ref{example}.

From a bird's eye view, our proof will take the following steps.
First, we will introduce a special coloring $f_S$ that can be defined for any multiset of vertices $S$. We will then show that some selected terms $\mathbf{x}_{f_S}$ can be isolated from the sampling function. We will demonstrate that from these terms, we can reconstruct $\mathbf{x}_{(N)h}$. Finally, we will show that from $\mathbf{x}_{(N)h}$, we can reconstruct the rooted tree.

In this section, we will accomplish the first proof step and set up a framework for the third.

We begin by defining a coloring $f_\varnothing$, which will act as the base coloring upon which $f_S$ will be constructed. Let $f_\varnothing$ be the coloring such that $f_\varnothing(v) = 1 + h_v$. See Figure \ref{fig:varnothing} for an example.

\begin{figure}[h]
\centering
\includegraphics{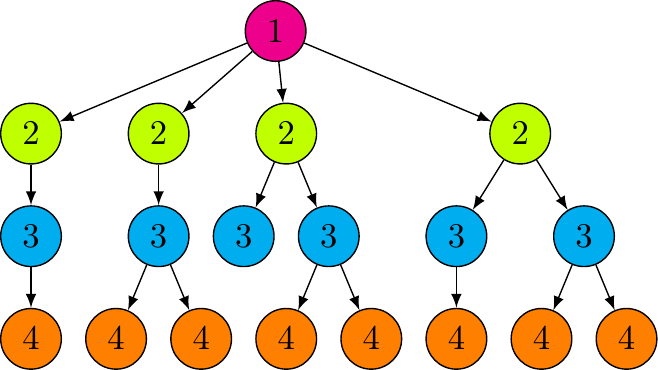}
\caption{Here the coloring $f_\varnothing$ is depicted. Note that $\mathbf{x}_{f_\varnothing} = {x_1}^1 {x_2}^4 {x_3}^6 {x_4}^8$.}
\label{fig:varnothing}
\end{figure}

We need a bit of preliminary notation before defining $f_S$. For two sets $S \subseteq S'$, let the indicator function $\mathbbm{1}_S: S' \to \{0, 1\}$ be the function such that $\mathbbm{1}_S(v) = 1$ if $v \in S$ and $0$ otherwise.

\begin{definition}
For a multiset of vertices $S$, let $f_S$ be the coloring such that $f_S(v) = 1 + h_v + \sum_{g \in S} \mathbbm{1}_{V(S_g)}(v)$.
\end{definition}

Note that $\sum_{g \in S} \mathbbm{1}_{V(S_g)}(v)$ is the number of ancestors of $v$ in $S$.

This construction is a generalization of $f_{n, m}$ from \cite{cai2020combinatorial}, which takes two positive integers $n, m$ and produces a coloring. In contrast, $f_S$ takes any multiset of vertices $S$ and produces a coloring.

We claim that this construction is general enough to encompass every increasing coloring. In other words, for $f$ an increasing coloring of $T$, we can find $S$ such that $f = f_S$. Intuitively, we let $S$ be a measure of how much $f$ ``deviates'' from $f_\varnothing$. In $f_\varnothing$, moving along any edge increases the color by one. If in $f$ moving along an edge increases the color by more than one, then putting a vertex in $S$ accounts for the difference.

\begin{theorem}
\label{f_S}
For every increasing coloring $f \colon V(T) \to \mathbb{Z}^+$, there exists a unique multiset of vertices $S$ such that $f = f_S$. See Figure \ref{fig:gaps} for an example.
\end{theorem}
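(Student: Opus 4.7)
The plan is to invert the defining relation for $f_S$ by viewing the sum $\sum_{g \in S} \mathbbm{1}_{V(S_g)}(v)$ as a count along the unique root-to-$v$ path. First I would set $c_S(v) := \sum_{g \in S} \mathbbm{1}_{V(S_g)}(v)$ and observe that $v \in V(S_g)$ holds precisely when $g$ lies on the path from $v_T$ to $v$ (with $v$ allowed as an endpoint), so $c_S(v)$ is the number of vertices of $S$ on that path, counted with multiplicity. In particular, if $u$ is the parent of $v$, writing the root-to-$v$ path as the root-to-$u$ path with $v$ appended yields the telescoping identities
$$c_S(v_T) = \mathrm{mult}_S(v_T), \qquad c_S(v) - c_S(u) = \mathrm{mult}_S(v).$$

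For uniqueness, note that if $f = f_S$ then $c_S(v) = f(v) - 1 - h_v$ for every $v$, and the identities above force
$$\mathrm{mult}_S(v_T) = f(v_T) - 1, \qquad \mathrm{mult}_S(v) = f(v) - f(u) - 1 \ \text{ for } v \ne v_T \text{ with parent } u.$$
Since the multiplicities determine $S$ as a multiset, this establishes uniqueness. For existence, I would define $S$ by exactly the formula above; the values are nonnegative because $f$ takes values in $\mathbb{Z}^+$ (so $f(v_T) \ge 1$) and because the increasing hypothesis forces $f(v) \ge f(u) + 1$ for every parent–child pair. A short induction on coheight, summing the differences $\mathrm{mult}_S(v)$ along the root-to-$v$ path, then verifies that the constructed $S$ satisfies $f_S = f$.

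I do not anticipate a serious obstacle. The argument is essentially a discrete fundamental theorem of calculus applied to the function $f - (1 + h)$ along root-to-$v$ paths; the only delicate point worth flagging is that the word \emph{strict} in the hypothesis $f(u) < f(v)$ is precisely what is needed to guarantee that the resulting multiplicities $f(v) - f(u) - 1$ are nonnegative, so the construction would genuinely break without it.
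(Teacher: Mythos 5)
Your proposal is correct and takes essentially the same approach as the paper: both identify the multiplicity of each vertex $v$ in $S$ as $f(v) - f(\text{parent}(v)) - 1$ (with the root convention) and verify this via a telescoping sum along the unique root-to-$v$ path, with the strictness of the increasing condition guaranteeing nonnegativity. Your version is organized a bit more cleanly by separating the uniqueness direction (reading off multiplicities) from the existence direction (defining $S$ and checking $f_S = f$), whereas the paper folds both into a single recursion on coheight, but the underlying argument is the same.
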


\begin{figure}[h]
\centering

\includegraphics{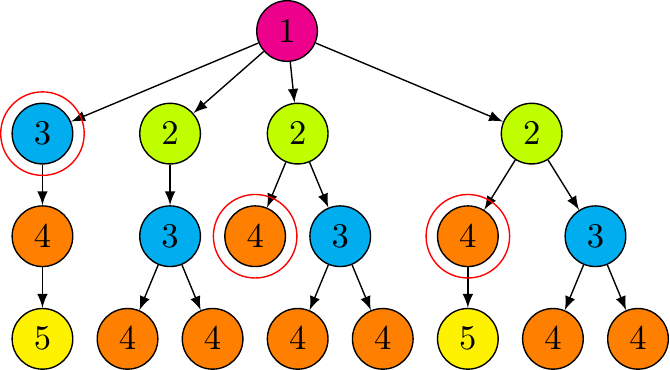}
\caption{The coloring $f$ in Figure \ref{fig:ex} is equal to $f_S$ for $S$ that consists of the circled vertices.}
\label{fig:gaps}
\end{figure}

\begin{proof}
We find the unique $S$ by using recursion on the coheight $h_v$. Begin by considering $h_v = 0$, which includes only the root $v_T$. Since
\begin{align*}
f(v_T) = f_S(v_T) & = 1 + h_{v_T} + \sum_{g \in S} \mathbbm{1}_{V(S_g)}(v_T) \\
& = 1 + \sum_{\substack{g \in S \\ g = v_T}} 1
\end{align*}
we must have that the root $v_T$ appears exactly $f(v_T) - 1$ times in $S$. This works since $f(v_T) - 1 \ge 0$.

Suppose that for $h_v < n$, if the parent of $v$ is $p$, then we have that $v$ must appear $f(v) - f(p) - 1$ times in $S$. This works since $f(v) - f(p) - 1 \ge 0$, which is true because $f$ is increasing. If $v = v_T$, then we set $f(p) = 0$.

Now, let us find for $h_v = n$ how many times $v$ must appear in $S$. Let the path from $v_T$ to $v$ be $p_0 = v_T, p_1, \dots, p_{h_v - 1}, p_{h_v} = v$.

For convenience, let $f(p_{-1}) = 0$. Then
\begin{align*}
f(v) = f_S(v) & = 1 + h_v + \sum_{g \in S} \mathbbm{1}_{V(S_g)}(v) \\
& = 1 + h_v + \sum_{\substack{g \in S \\ v \in S_g}} 1 \\
& = 1 + h_v + \sum_{0 \le i \le h_v - 1} (f(p_i) - f(p_{i - 1}) - 1) + \sum_{\substack{g \in S \\ g = v}} 1 \\
& = 1 + \sum_{0 \le i \le h_v - 1} (f(p_i) - f(p_{i - 1})) + \sum_{\substack{g \in S \\ g = v}} 1 \\
& = 1 + f(p_{h_v}) + \sum_{\substack{g \in S \\ g = v}} 1.
\end{align*}

Thus $v$ appears exactly $f(v) - f(p) - 1$ times in $S$.
\end{proof}

We proceed by setting up a framework for the third proof step, which is to show that from the terms isolated in the second proof step, one can reconstruct $\mathbf{x}_{(N)h}$. The method to do this involves looking at the differences between the terms.

We can do this by expressing $\mathbf{x}_{f_S}$ in terms of $\mathbf{x}_h$ and $\mathbf{x}_h |_g$ for $g \in S$ (Theorem \ref{formula}). This sets up a structure for the rest of our work.

The following theorem establishes the recursive step for Theorem \ref{formula} by expressing $\mathbf{x}_{f_{S'}}$, where $S' = S \cup \{g^1\}$, in terms of $\mathbf{x}_{f_S}$ and $\mathbf{x}_h |_g$.

The idea is that upon adding $g'$, the color of every vertex below $g'$ is shifted up by one.

\begin{theorem} \label{onegap}

Let $S$ be a multiset of vertices. For any $g' \in V(T)$ such that $g'$ has no descendants in $S$, except possibly itself, the following is true. Let $S' = S \cup \{{g'}^1\}$. Let $h' = \sum_{\substack{g \in S}} \mathbbm{1}_{V(S_{g})}(g')$.

Then
$$\mathbf{x}_{f_{S'}} = \mathbf{x}_{f_S} \prod_{v \in V(S_{g'})} \frac{x_{2 + h' + h_v}}{x_{1 + h' + h_v}}.$$

\end{theorem}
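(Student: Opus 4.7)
The plan is to directly compare the two colorings $f_S$ and $f_{S'}$ vertex by vertex, then reduce the product identity to a closed-form expression for $f_S(v)$ on the subtree $V(S_{g'})$. Recall from the definition of $f_S$ that
\[ f_S(v) = 1 + h_v + \sum_{g \in S} \mathbbm{1}_{V(S_g)}(v), \]
so the sum counts, with multiplicity, the elements of $S$ that are weak ancestors of $v$ (i.e., vertices $g$ with $v \in V(S_g)$, including $g = v$).

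First, I would split $V(T)$ into $V(S_{g'})$ and its complement. Since $S' = S \cup \{{g'}^1\}$, the only new summand in $f_{S'}(v) - f_S(v)$ is the indicator $\mathbbm{1}_{V(S_{g'})}(v)$. Hence $f_{S'}(v) = f_S(v)$ for $v \notin V(S_{g'})$ and $f_{S'}(v) = f_S(v) + 1$ for $v \in V(S_{g'})$. Taking the product over all vertices, the contributions outside $V(S_{g'})$ cancel and I obtain
\[ \mathbf{x}_{f_{S'}} = \mathbf{x}_{f_S} \prod_{v \in V(S_{g'})} \frac{x_{f_S(v) + 1}}{x_{f_S(v)}}. \]

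The remaining task is to show that $f_S(v) = 1 + h_v + h'$ for every $v \in V(S_{g'})$, which will match the desired formula. Here is where the hypothesis that \emph{$g'$ has no descendants in $S$ except possibly itself} is essential: for $v \in V(S_{g'})$, the weak ancestors of $v$ split into weak ancestors of $g'$ together with vertices strictly between $g'$ and $v$ (including $v$ itself when $v \neq g'$), and the hypothesis guarantees that the latter contribute nothing to the count over $S$. Thus the ancestor sum reduces to the count over weak ancestors of $g'$ in $S$, which is precisely $h'$ by definition. Substituting back gives the claimed ratio.

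The only genuinely delicate point is the bookkeeping in the third step: one has to carefully distinguish weak from strict descendants of $g'$ and verify that the hypothesis kills exactly the right terms, including the edge case $v = g'$ where the assumption permits $g'$ itself to lie in $S$. Once the ancestor count is pinned down uniformly on $V(S_{g'})$, the rest is formal manipulation of the monomial.
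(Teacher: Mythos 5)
Your proposal is correct and follows essentially the same two-step structure as the paper's proof: first reduce the product to the subtree $V(S_{g'})$ via $f_{S'}(v) - f_S(v) = \mathbbm{1}_{V(S_{g'})}(v)$, then show $f_S(v) = 1 + h_v + h'$ uniformly on that subtree. The only cosmetic difference is in the second step: you decompose the weak ancestors of $v$ into weak ancestors of $g'$ plus the strict descendants of $g'$ on the path to $v$, whereas the paper argues that for every $g \in S$ the subtree $S_{g'}$ is either contained in or disjoint from $S_g$, so $\mathbbm{1}_{V(S_g)}(v) = \mathbbm{1}_{V(S_g)}(g')$; these are two phrasings of the same observation, and both correctly invoke the no-strict-descendants hypothesis at exactly the right point.
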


\begin{proof}

We know that $f_{S'}(v) = f_S(v) + \mathbbm{1}_{V(S_{g'})}(v)$, so:
$$\mathbf{x}_{f_{S'}} = \mathbf{x}_{f_S} \prod_{v \in V(T)} \frac{x_{f_{S'}(v)}}{x_{f_S(v)}} = \mathbf{x}_{f_S} \prod_{v \in V(S_{g'})} \frac{x_{f_{S}(v) + 1}}{x_{f_S(v)}}.$$
We claim that the numerator and denominator of this expression are equal to the numerator and denominator of the desired expression, respectively. It is sufficient to prove that for every vertex $v \in V(S_{g'})$, it is true that $f_S(v) = 1 + h' + h_v$.

We know that $g'$ has no descendants in $S$. Thus, for all $g \in S$, $S_{g'}$ is either

contained in or has empty intersection with $S_g$. This means that $\mathbbm{1}_{V(S_{g})}(v) = \mathbbm{1}_{V(S_{g})}(g')$ for all $v \in V(S_{g'})$, and
\begin{align*}
f_S(v) & = 1 + h_v + \sum_{\substack{g \in S}} \mathbbm{1}_{V(S_{g})}(v) \\
& = 1 + h_v + \sum_{\substack{g \in S}} \mathbbm{1}_{V(S_{g})}(g') \\
& = 1 + h_v + h'.
\end{align*}
\end{proof}

To clean up the notation, we introduce the \textit{shift function} $\sigma$ and the \textit{shift difference function} $\tau$.

\begin{definition}
The \textbf{shift function} $\sigma: \mathbb{Z}[[x_i]]_{i \in \mathbb{N}} \to \mathbb{Z}[[x_i]]_{i \in \mathbb{N}}$ is defined by

$$\sigma \left( k \prod_{i \in \mathbb{N}} x_i^{e_i} \right) = k \prod_{i \in \mathbb{N}} x_{1 + i}^{e_i}.$$
We denote $s \in \mathbb{N}$ repeated applications of $\sigma$ by $\sigma^s$.
\end{definition}

\begin{definition}
The \textbf{shift difference function} $\tau: \langle x_i \rangle_{i \in \mathbb{N}} \to \langle x_i \rangle_{i \in \mathbb{N}}$

is defined by $\tau(\mathbf{x}) = \frac{\sigma(\mathbf{x})}{\mathbf{x}}$.
\end{definition}

The following are some useful properties of $\sigma$ and $\tau$ that we will use later.

\begin{remark} \label{sigma_multiplicative}
Notice that $\sigma$ is multiplicative; that is, $\sigma(\mathbf{x}_1) \sigma(\mathbf{x}_2) = \sigma(\mathbf{x}_1 \mathbf{x}_2)$. Thus, $\tau$ is also multiplicative.
\end{remark}

\begin{remark} \label{sigma_ordering}
Notice that $\sigma$ preserves the ordering of $\langle x_i \rangle_{i \in \mathbb{N}}$; that is, if $\mathbf{x}_1 < \mathbf{x}_2$, then $\sigma(\mathbf{x}_1) < \sigma(\mathbf{x}_2)$. Notice also that $\tau$ reverses the ordering.
\end{remark}

\begin{remark} \label{sigma_invertible}
Notice that $\sigma$ and $\tau$ are invertible.
\end{remark}

Given these definitions, we can rewrite Theorem \ref{onegap} as follows:
$$\mathbf{x}_{f_{S'}} = \mathbf{x}_{f_S} \prod_{v \in V(S_{g'})} \frac{x_{2 + h' + h_v}}{x_{1 + h' + h_v}} = \mathbf{x}_{f_S} \cdot \sigma( \tau( \sigma^{h'}(\mathbf{x}_h |_{g'}))).$$

In order to turn the inductive step, Theorem \ref{onegap}, into a full expression for $\mathbf{x}_{f_S}$, Theorem \ref{formula}, we need to encode the dependence of $h'$ on $S$ and $g'$ into its notation. We do this by defining the \textit{elevation function} of $S$:

\begin{definition}
Given a multiset of vertices $S$, the \textbf{elevation function of $S$} is the function $h_S: S \to \mathbb{N}$ that maps $g \in S$ to
$$h_{S}(g) = \sum_{g' \in S \setminus \{g^1\}} \mathbbm{1}_{V(S_{g'})}(g).$$
If the same vertex appears multiple times in $S$, give them an arbitrary order so that they take consecutive values under $h_S$. For example, if the root appears three times in $S$, then they take the values $0, 1, 2$ under $h_S$, and a child of the root would take the value $3$.
\end{definition}

\begin{remark}
The elevation function $S$ can be thought of as sending $g \in V(T)$ to the number of elements of $S$ that are ancestors of $g$, possibly including itself.
\end{remark}

Since $h' = \sum_{\substack{g \in S}} \mathbbm{1}_{V(S_{g})}(g')$, we can further restate Theorem \ref{onegap} as follows:
$$\mathbf{x}_{f_{S'}} = \mathbf{x}_{f_S} \cdot \sigma( \tau( \sigma^{h'}(\mathbf{x}_h |_{g'}))) = \mathbf{x}_{f_S} \cdot \sigma( \tau( \sigma^{h_{S'}(g')}(\mathbf{x}_h |_{g'}))).$$

Finally, we have the theorem that expresses $\mathbf{x}_{f_S}$ in terms of $\mathbf{x}_h$ and $\mathbf{x}_h |_g$ for $g \in S$.

\begin{theorem} \label{formula}
For any multiset of vertices $S$, we have:
$$\mathbf{x}_{f_S} = \sigma \left( \mathbf{x}_h \tau \left( \prod_{g \in S} \sigma^{h_S(g)} (\mathbf{x}_h |_g) \right) \right).$$
\end{theorem}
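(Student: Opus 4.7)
The plan is to prove the formula by induction on $|S|$, using Theorem \ref{onegap} as the recursive step. The base case $S = \varnothing$ is immediate: since $f_\varnothing(v) = 1 + h_v$, we have $\mathbf{x}_{f_\varnothing} = \prod_{v \in V(T)} x_{1 + h_v} = \sigma(\mathbf{x}_h)$, and the right-hand side reduces to $\sigma(\mathbf{x}_h \cdot \tau(1)) = \sigma(\mathbf{x}_h)$ since the empty product is $1$.

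For the inductive step, given $S$ with $|S| \geq 1$, I would choose a vertex $g' \in S$ that has no proper descendant appearing in $S$; such an element always exists (pick any $g' \in S$ minimal with respect to the ancestor relation among vertices appearing in $S$). Setting $S_0 = S \setminus \{g'^1\}$, the hypothesis of Theorem \ref{onegap} is satisfied, yielding
$$\mathbf{x}_{f_S} \;=\; \mathbf{x}_{f_{S_0}} \cdot \sigma\!\bigl(\tau\bigl(\sigma^{h_S(g')}(\mathbf{x}_h |_{g'})\bigr)\bigr),$$
where I identify the quantity $h'$ from Theorem \ref{onegap} with $h_S(g')$ — by the elevation convention, the newly added copy of $g'$ receives the top elevation value, which equals the number of proper ancestors of $g'$ in $S_0$ plus the multiplicity of $g'$ in $S_0$. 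Then I apply the inductive hypothesis to $S_0$ and collapse the two expressions using the multiplicativity of $\sigma$ and $\tau$ (Remark \ref{sigma_multiplicative}):
$$\mathbf{x}_{f_S} \;=\; \sigma\!\left(\mathbf{x}_h\, \tau\!\left(\prod_{g \in S_0} \sigma^{h_{S_0}(g)}(\mathbf{x}_h |_g) \,\cdot\, \sigma^{h_S(g')}(\mathbf{x}_h |_{g'})\right)\right).$$

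To conclude, I need to verify that this product matches $\prod_{g \in S} \sigma^{h_S(g)}(\mathbf{x}_h |_g)$, i.e., that $h_{S_0}(g) = h_S(g)$ for every $g \in S_0$. For $g \neq g'$, the count of ancestors of $g$ in $S$ versus $S_0$ differs only by whether $g'$ is an ancestor of $g$, but our choice of $g'$ forbids $g$ from being a descendant of $g'$. For the case $g = g'$ with $g'$ of multiplicity $m \geq 2$ in $S$, the elevation convention assigns consecutive values to the $m$ copies; removing the conventionally top copy leaves the remaining $m-1$ copies with their original values intact, so again $h_{S_0}(g) = h_S(g)$ on each retained copy.

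The main obstacle is the bookkeeping around the elevation function when $g'$ appears multiple times in $S$, since the raw formula $\sum_{g'' \in S \setminus \{g^1\}} \mathbbm{1}_{V(S_{g''})}(g)$ does not distinguish between copies of the same vertex and the ``consecutive values'' convention must be invoked to make the arithmetic come out right. Organizing the induction so that we always remove the conventionally-last copy of $g'$ — equivalently, so that $g'$ is a maximal element of $S$ in the ancestor order — eliminates this subtlety and lets the factors in the product align term-by-term with the recursive step.
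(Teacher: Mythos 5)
Your proposal is correct and follows essentially the same approach as the paper: induction on $|S|$, removing a vertex $g'$ with no proper descendants in $S$, applying Theorem \ref{onegap} for the inductive step, and collapsing via the multiplicativity of $\sigma$ and $\tau$. You are somewhat more careful than the paper about identifying $h'$ with $h_S(g')$ and verifying that $h_{S_0} = h_S$ on the retained copies under the consecutive-values convention, but this is a matter of detail rather than a different route.
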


\begin{proof}
For $S = \varnothing$, we have by definition that $\mathbf{x}_{f_\varnothing} = \sigma(\mathbf{x}_h)$, which is equal to the desired formula because $\tau(1) = 1$. Suppose that the desired formula is true for all $|S| = n$. Now, consider a multiset of vertices $S'$ such that $|S'| = n + 1$. Pick a $g' \in S'$ such that $g'$ has no descendants in $S'$, and let $S = S' \setminus \{{g'}^1\}$. By Theorem \ref{onegap}, we have the following. We use here the fact that $\sigma$ and $\tau$ are multiplicative.
\begin{align*}
\mathbf{x}_{f_{S'}} & = \mathbf{x}_{f_S} \cdot \sigma( \tau( \sigma^{h_{S'}(g')} (\mathbf{x}_h |_{g'}))) \\
& = \sigma \left( \mathbf{x}_h \tau \left( \prod_{g \in S} \sigma^{h_S(g)} (\mathbf{x}_h |_{g}) \right) \right) \cdot \sigma( \tau( \sigma^{h_{S'}(g')} (\mathbf{x}_h |_{g'}))) \\
& = \sigma \left( \mathbf{x}_h \tau \left( \prod_{g \in S'} \sigma^{h_{S'}(g)} (\mathbf{x}_h |_{g}) \right) \right).
\end{align*}

\end{proof}

\begin{example}
Consider the coloring together with the set $S$ in Figure \ref{fig:gaps}. In this example, no vertex of $S$ is an ancestor of another, so $h_S(g)$ is always zero. Then Theorem \ref{formula} says the following. It is instructive to split up $\tau$ to emphasize the shift that every individual gap produces.
\begin{align*}
\mathbf{x}_{f_S} & = \sigma \left( \mathbf{x}_h \prod_{g \in S} \tau(\mathbf{x}_h |_g) \right) \\
& = \sigma \left( {x_0}^1 {x_1}^4 {x_2}^6 {x_3}^8 \cdot \tau(x_1 x_2 x_3) \cdot \tau(x_2) \cdot \tau(x_2 x_3) \right) \\
& = \sigma \left( {x_0}^1 {x_1}^4 {x_2}^6 {x_3}^8 \cdot \frac{x_2 x_3 x_4}{x_1 x_2 x_3} \cdot \frac{x_3}{x_2} \cdot \frac{x_3 x_4}{x_2 x_3} \right) \\
& = \sigma \left( {x_0}^1 {x_1}^3 {x_2}^4 {x_3}^9 {x_4}^2 \right) \\
& = {x_1}^1 {x_2}^3 {x_3}^4 {x_4}^9 {x_5}^2,
\end{align*}
as expected. Note that the exponents of $\mathbf{x}_h |_g$ do not have to all be 1; they just happen to be so in this example.
\end{example}

\section{Reconstructing the tree} \label{main}

Our ultimate goal, Theorem \ref{fin}, is to reconstruct $T$ from the sampling function. Recall from Definition \ref{sampling} that the sampling function $F$ is defined by
$$F \left( \prod_{i \le n} x_i^{e_i} \right) = \max \left( \left[ \prod_{i \le n} x_i^{e_i} \right] \Gamma^<(T; \mathbf{x}) \right).$$

Our stepping stones to Theorem \ref{fin} involve reconstructing $\mathbf{x}_{(N)h}$.
This requires a recursive action: first reconstructing $\mathbf{x}_h$ in Theorem \ref{one}, then $\mathbf{x}_{\mathbf{x}_h}$ in Theorem \ref{two}, then $\mathbf{x}_{(3)h}$ in Theorem \ref{three}, and then general $\mathbf{x}_{(N)h}$ in Theorem \ref{general}.

The following theorem reconstructs $\mathbf{x}_h$ from $F(1) = {\max}^1(\Gamma^<(T; \mathbf{x}))$. Note that Section \ref{step1} is a specific case of this reconstruction.

The main idea of the proof is as follows.

We will show that the maximum $\mathbf{x}_f$ is achieved with $f_\varnothing$. See Figure \ref{fig:varnothing} for a depiction of $f_\varnothing$. We know that $f_\varnothing(v) = 1 + h_v$, so to find the number of vertices with coheight $n$, we need only check how many vertices are colored $1 + n$ in $f_\varnothing$.

\begin{theorem} \label{one}
The coheight profile $\mathbf{x}_h$ can be reconstructed from the sampling function $F$.
\end{theorem}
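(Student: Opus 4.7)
The plan is to evaluate the sampling function at $1$, identify $F(1) = \max^1(\Gamma^<(T; \mathbf{x}))$ with $\mathbf{x}_{f_\varnothing} = \sigma(\mathbf{x}_h)$, and then recover $\mathbf{x}_h$ by applying $\sigma^{-1}$ (available by Remark \ref{sigma_invertible}). By Theorem \ref{f_S}, every term of $\Gamma^<(T; \mathbf{x})$ has the form $\mathbf{x}_{f_S}$ for some multiset of vertices $S$, so it suffices to show that $\mathbf{x}_{f_\varnothing}$ is lexicographically the greatest such term. From the definition, $f_S(v) = f_\varnothing(v) + |\{g \in S : v \in V(S_g)\}|$, so $f_S(v) \geq f_\varnothing(v)$ pointwise, with strict inequality precisely when $v$ has at least one ancestor (possibly itself) in $S$.

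To upgrade this pointwise comparison to a lex comparison of exponent tuples, I would suppose $S \neq \varnothing$ and let $k$ be the smallest coheight of any vertex of $S$. For every vertex $v$ with $h_v < k$, no ancestor of $v$ lies in $S$, so $f_S(v) = f_\varnothing(v)$; hence the exponents of $x_1, \dots, x_k$ in $\mathbf{x}_{f_S}$ and $\mathbf{x}_{f_\varnothing}$ agree. At the next coordinate $x_{k+1}$, each coheight-$k$ vertex $g \in S$ satisfies $f_S(g) \geq k + 2$, whereas $f_\varnothing$ colors every coheight-$k$ vertex with exactly $k+1$, and no vertex of coheight different from $k$ gets $f_S$-color $k+1$. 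Thus the $x_{k+1}$-exponent of $\mathbf{x}_{f_S}$ is strictly less than that of $\mathbf{x}_{f_\varnothing}$, and $\mathbf{x}_{f_S} < \mathbf{x}_{f_\varnothing}$ lexicographically. This establishes $F(1) = \mathbf{x}_{f_\varnothing} = \sigma(\mathbf{x}_h)$, after which $\sigma^{-1}$ recovers $\mathbf{x}_h$.

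The hard part is the bookkeeping in the preceding paragraph: ruling out contributions to the $x_{k+1}$-exponent from vertices of coheight other than $k$, and accounting for the possibility that a vertex appears in $S$ with multiplicity greater than one. Both issues reduce to the pointwise lower bound $f_S(v) \geq 1 + h_v$ combined with the fact that the ancestors of any vertex of coheight below $k$ all have coheight below $k$ as well, so none of them lie in $S$. One could alternatively phrase this using the recursion of Theorem \ref{formula} and the fact that $\tau$ reverses the order on nontrivial monomials (Remark \ref{sigma_ordering}), but the direct combinatorial comparison above is cleaner and avoids subtleties of ordering Laurent monomials.
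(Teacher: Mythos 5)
Your argument is correct, but it takes a genuinely different route from the paper. The paper's proof feeds $\max^1(\Gamma^<(T;\mathbf{x}))$ through the algebraic machinery of Theorem~\ref{formula}: it writes every term as $\sigma\bigl(\mathbf{x}_h\,\tau\bigl(\prod_{g\in S}\sigma^{h_S(g)}(\mathbf{x}_h|_g)\bigr)\bigr)$, uses Remark~\ref{sigma_ordering} to pull $\sigma$ (order-preserving) and $\tau$ (order-reversing) through the $\max$, and observes that the resulting inner product is minimized (equal to $1$) exactly when $S=\varnothing$. You instead compare the exponent tuples of $\mathbf{x}_{f_S}$ and $\mathbf{x}_{f_\varnothing}$ directly: from $f_S(v)=f_\varnothing(v)+\sum_{g\in S}\mathbbm{1}_{V(S_g)}(v)\ge f_\varnothing(v)$ and $f_\varnothing(v)=1+h_v$, you locate the smallest coheight $k$ represented in $S$, show the exponents of $x_1,\dots,x_k$ agree, and show the $x_{k+1}$-exponent strictly drops, giving $\mathbf{x}_{f_S}<\mathbf{x}_{f_\varnothing}$ for $S\ne\varnothing$. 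Both routes are sound; yours is the more elementary one and, as you note, sidesteps the subtlety that $\tau$ produces Laurent monomials so that invoking the lexicographic order really requires extending it to tuples with negative entries (the paper glosses over this). The trade-off is that the paper's route establishes the $\sigma$/$\tau$ bookkeeping pattern that is reused verbatim in Theorem~\ref{conv}, where a direct pointwise comparison would be harder to set up, so the algebraic phrasing earns its keep in later sections even though it is heavier here.
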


\begin{proof}

We evaluate $F$ at $1$. By definition, $F(1) = {\max}^1(\Gamma^<(T; \mathbf{x}))$. By Theorem \ref{f_S}, we need only find the $S$ that gives the maximum $\mathbf{x}_{f_S}$, so
$${\max}^1(\Gamma^<(T; \mathbf{x})) = \max_S \left( \mathbf{x}_{f_S} \right).$$
We apply Theorem \ref{formula} to express the latter in terms of $\mathbf{x}_h |_g$ terms:
$$\max_S \left( \mathbf{x}_{f_S} \right) = \max_S \left( \sigma \left( \mathbf{x}_h \tau \left( \prod_{g \in S} \sigma^{h_S(g)} (\mathbf{x}_h |_g) \right) \right) \right).$$
By Remark \ref{sigma_ordering}, $\sigma$ preserves the ordering of the elements of $\langle x_i \rangle_{i \in \mathbb{N}}$, so
$$\max_S \left( \sigma \left( \mathbf{x}_h \tau \left( \prod_{g \in S} \sigma^{h_S(g)} (\mathbf{x}_h |_g) \right) \right) \right) = \sigma \left( \mathbf{x}_h \max_S \left( \tau \left( \prod_{g \in S} \sigma^{h_S(g)} (\mathbf{x}_h |_g) \right) \right) \right),$$
and $\tau$ reverses said ordering:
$$\sigma \left( \mathbf{x}_h \max_S \left( \tau \left( \prod_{g \in S} \sigma^{h_S(g)} (\mathbf{x}_h |_g) \right) \right) \right) = \sigma \left( \mathbf{x}_h \tau \left( \min_S \left( \prod_{g \in S} \sigma^{h_S(g)} (\mathbf{x}_h |_g) \right) \right) \right).$$
The minimum is $1$, achieved by $S = \varnothing$, so we conclude with
\begin{align*}
\sigma \left( \mathbf{x}_h \tau \left( \min_S \left( \prod_{g \in S} \sigma^{h_S(g)} (\mathbf{x}_h |_g) \right) \right) \right) & = \sigma( \mathbf{x}_h \tau(1) ) = \sigma(\mathbf{x}_h).
\end{align*}

By Remark \ref{sigma_invertible}, $\sigma$ is invertible. Thus, we have reconstructed $\mathbf{x}_h$.
\end{proof}

To get more information out of the strict order quasisymmetric function, we need to exploit the sampling function more generally. Our method primarily involves repeated applications of the following operation.

To introduce Theorem \ref{conv}, we present a specific case of the theorem and its proof.

\begin{example}
We can reconstruct
\begin{equation}
\label{convonegap}
\min_S \left( \left[ x_n \right] \prod_{g \in S} \sigma^{h_S(g)} (\mathbf{x}_h |_g) \right),
\end{equation}
which equals $\mathbf{x}_h |_{g_1}$, where $g_1$ is the vertex of coheight $n$ with the smallest coheight profile.

We do this by isolating the following term from the sampling function $F$:
\begin{equation} \label{termonegap}
{\max}^1 \left( \left[ \sigma \left(\phi_n(\mathbf{x}_h) x_n^{-1} \right) \right] \Gamma^<(T; \mathbf{x}) \right).
\end{equation}

Let $f$ be the coloring such that (\ref{termonegap}) is $\mathbf{x}_f$. The condition means that $\mathbf{x}_f$ must match $\sigma(\mathbf{x}_h) = \mathbf{x}_{f_\varnothing}$ up to the exponent of $x_{n - 1}$, but the exponent of $x_n$ in $\mathbf{x}_f$ is 1 smaller. Thus, $f$ is identical to $f_\varnothing$ up to layer $n - 1$ but with one less use of color $n$, leaving a ``gap'' in layer $n$. With the maximality condition, $f$ after layer $n$ is also like $f_\varnothing$ except that the colors of the descendants of the gap are shifted up by one. Given this, it is possible to see that the maximality condition forces the gap to be $g_1$ (smallest coheight profile). See Figure \ref{fig:onegapagain} for a depiction of $f$. Then, it is possible to show that by comparing $\mathbf{x}_f$ with $\mathbf{x}_{f_\varnothing}$, we can reconstruct $\mathbf{x}_h |_{g_1}$. We omit the proofs here, as they will be detailed in Theorem \ref{conv}.

\begin{figure}[h]
\centering
\includegraphics{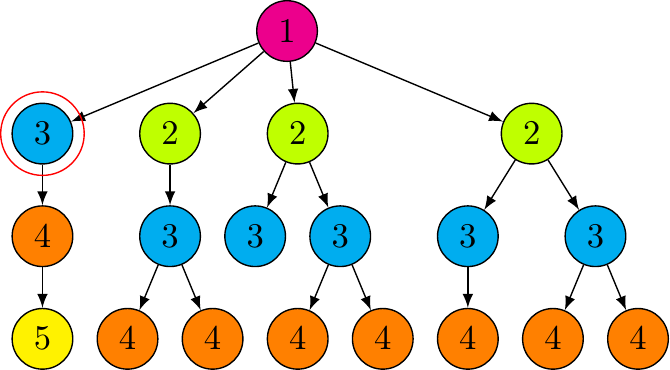}
\caption{Here the coloring $f$ as defined in Theorem \ref{two} is depicted. $g_1$ is circled.}
\label{fig:onegapagain}
\end{figure}

The idea behind the general case is to leave more gaps by imposing more restricted conditions in (\ref{termonegap}). For example, imposing the condition $[\sigma(\phi_n(\mathbf{x}_h) x_n^{-2})]$ in (\ref{termonegap}) leaves 2 gaps in layer $n$, equivalent to imposing the condition $[x_n^2]$ in (\ref{convonegap}).
\end{example}

\begin{theorem} \label{conv}
The function $\tilde{F} \colon \langle x_i \rangle_{i \in \mathbb{Z}^+} \to \langle x_i \rangle_{i \in \mathbb{Z}^+} \cup \{ \varnothing \}$ defined by
$$\tilde{F} \left( \prod_{i \le n} x_i^{e_i} \right) = \min_S \left( \left[ \prod_{i \le n} x_i^{e_i} \right] \prod_{g \in S} \sigma^{h_S(g)} (\mathbf{x}_h |_g) \right),$$
where the minimum is taken over all $S$ that produce a nonempty expression, can be reconstructed from the sampling function $F$.
\end{theorem}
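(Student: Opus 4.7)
The plan is to invert the identity $\mathbf{x}_{f_S} = \sigma(\mathbf{x}_h \tau(P_S))$ of Theorem \ref{formula}, where $P_S := \prod_{g \in S} \sigma^{h_S(g)}(\mathbf{x}_h|_g)$. Since Theorem \ref{one} already lets me extract $\mathbf{x}_h$ from $F(1)$, this identity becomes an explicit invertible correspondence between $P_S$ and $\mathbf{x}_{f_S}$, and the reconstruction of $\tilde F$ amounts to trading the minimization over $P_S$ for the maximization already computed by $F$.

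Writing $\mathbf{x}_h = \prod_{i \ge 0} x_i^{b_i}$, $P_S = \prod_{i \ge 0} x_i^{a_i}$, and $\mathbf{x}_{f_S} = \prod_{i \ge 1} x_i^{c_i}$, unwinding Theorem \ref{formula} through the definitions of $\sigma$ and $\tau$ yields the linear recurrence
$$c_1 = 1 - a_0, \qquad c_k = b_{k-1} + a_{k-2} - a_{k-1} \quad (k \ge 2).$$
With the $b_i$'s in hand, this recurrence is invertible in both directions, so each $\mathbf{x}_{f_S}$ returned by $F$ translates into its $P_S$ counterpart by applying $(\sigma \circ \tau)^{-1}$, justified by Remark \ref{sigma_invertible}.

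To compute $\tilde F(Q)$ for $Q = \prod_{i \le n} x_i^{e_i}$, the condition $[Q] P_S$ fixes $a_i = e_i$ for $1 \le i \le n$, while the codomain $\langle x_i \rangle_{i \in \mathbb{Z}^+}$ forces $a_0 = 0$ (the only value in $\{0,1\}$ that avoids an $x_0$ factor). Substituting these values into the recurrence yields specific $c_1, \ldots, c_{n+1}$; I set $Q' := \prod_{i=1}^{n+1} x_i^{c_i}$ and query $F(Q')$. Since $\sigma$ preserves and $\tau$ reverses lex order (Remarks \ref{sigma_ordering}, \ref{sigma_invertible}), the lex-maximum $\mathbf{x}_{f_S}$ under $[Q']$ returned by $F$ corresponds, through the recurrence, to the lex-minimum $P_S$ under $[Q]$. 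Inverting $\sigma \circ \tau$ on the result produces $\tilde F(Q)$ directly.

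The main obstacle is the one-step shift hidden in the truncation operator: fixing the first $n$ exponents of $P_S$ requires fixing the first $n+1$ exponents of $\mathbf{x}_{f_S}$, because each $c_k$ couples two consecutive $a$'s. Verifying that this shift, together with the negative sign of $a_{k-1}$ in $c_k = b_{k-1} + a_{k-2} - a_{k-1}$, converts lex-maximization over the free tail $(c_{n+2}, c_{n+3}, \ldots)$ into lex-minimization over $(a_{n+1}, a_{n+2}, \ldots)$ in a term-by-term, order-compatible way is the only nontrivial point; everything else reduces to bookkeeping around Theorems \ref{one}, \ref{f_S}, and \ref{formula}.
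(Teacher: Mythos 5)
Your proposal is correct and follows essentially the same route as the paper. Writing $P_S = \prod_i x_i^{a_i}$, $\mathbf{x}_h = \prod_i x_i^{b_i}$, $\mathbf{x}_{f_S} = \prod_i x_i^{c_i}$ and unwinding Theorem \ref{formula} into the recurrence $c_k = b_{k-1} + a_{k-2} - a_{k-1}$ is just the coordinate form of the paper's identity $\mathbf{x}_{f_S} = \sigma(\mathbf{x}_h\,\tau(P_S))$; your $Q' = \prod_{i=1}^{n+1} x_i^{c_i}$ is exactly the paper's query argument $\sigma\bigl(\phi_n(\mathbf{x}_h)\prod_{i\le n} x_i^{e_{i-1}-e_i}\bigr)$. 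The one point you flag as ``the only nontrivial point'' --- that the shift-plus-sign-flip in $c_k = b_{k-1}+a_{k-2}-a_{k-1}$ turns lex-maximization over the free $c$-tail into lex-minimization over the free $a$-tail --- is precisely Remark \ref{sigma_ordering} ($\sigma$ preserves, $\tau$ reverses, the lex order), which the paper likewise invokes without re-derivation; it is a one-line check (the first index of discrepancy in the $a$'s propagates, with sign reversed, to the first index of discrepancy in the $c$'s). So this is not a genuine gap, merely the same cited fact stated in coordinates rather than via the $\sigma$, $\tau$ operators.
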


Theorem \ref{conv} is the basis of the rest of this paper. With carefully chosen values of $\prod_{i \le n} x_i^{e_i}$, we can methodically reconstruct the information that we need to reconstruct $\mathbf{x}_{(N)h}$. For example, as will be seen in Theorem \ref{two}, we set $\prod_{i \le n} x_i^{e_i} = x_n$ in order to force $S$ to include a vertex of coheight $n$. Then, taking the minimum helps us get rid of everything extra, and we are left with $\mathbf{x}_h |_g$.

We need a quick definition for the proof.
\begin{definition}
For $n \in \mathbb{N}$, let the \textbf{truncate function} $\phi_n: \mathbb{Z}[[x_i]]_{i \in \mathbb{N}} \to \mathbb{Z}[[x_i]]_{i \in \mathbb{N}}$ be the function defined by
$$\phi_n \left( k \prod_{i \in \mathbb{N}} x_i^{e_i} \right) = k \prod_{i \le n} x_{i}^{e_i}.$$
\end{definition}

\begin{proof}[Proof of Theorem \ref{conv}]
Since we know $\mathbf{x}_h$ by Theorem \ref{one}, we can isolate the following term from the sampling function $F$.

\begin{equation*}
{\max}^1 \left( \left[ \sigma \left(\phi_n(\mathbf{x}_h) \prod_{i \le n} x_i^{e_{i - 1} - e_i} \right) \right] \Gamma^<(T; \mathbf{x}) \right).
\label{eq:reconstructable}
\end{equation*}

We will apply Theorem \ref{formula} to the expression and then modify the composition order of the functions, as shown. Like in Theorem \ref{one}, notice that $\sigma$ preserves the ordering of the elements of $\langle x_i \rangle_{i \in \mathbb{N}}$, while $\tau$ reverses it.
\begin{align*}
& {\max}^1 \left( \left[ \sigma \left(\phi_n(\mathbf{x}_h) \prod_{i \le n} x_i^{e_{i - 1} - e_i} \right) \right] \Gamma^<(T; \mathbf{x}) \right) \\
& = \max_S \left( \left[ \sigma \left( \phi_n(\mathbf{x}_h) \prod_{i \le n} x_i^{e_{i - 1} - e_i} \right) \right] \mathbf{x}_{f_S} \right) \\
& = \max_S \left( \left[ \sigma \left( \phi_n(\mathbf{x}_h) \prod_{i \le n} x_i^{e_{i - 1} - e_i} \right) \right] \sigma \left( \mathbf{x}_h \tau \left( \prod_{g \in S} \sigma^{h_S(g)} (\mathbf{x}_h |_g) \right) \right) \right) \\
& = \sigma \left( \max_S \left( \left[ \phi_n(\mathbf{x}_h) \prod_{i \le n} x_i^{e_{i - 1} - e_i} \right] \mathbf{x}_h \tau \left( \prod_{g \in S} \sigma^{h_S(g)} (\mathbf{x}_h |_g) \right) \right) \right) \\
& = \sigma \left( \mathbf{x}_h \max_S \left( \left[ \prod_{i \le n} x_i^{e_{i - 1} - e_i} \right] \tau \left( \prod_{g \in S} \sigma^{h_S(g)} (\mathbf{x}_h |_g) \right) \right) \right) \\
& = \sigma \left( \mathbf{x}_h \tau \left( \min_S \left( \left[ \prod_{i \le n} x_i^{e_i} \right] \prod_{g \in S} \sigma^{h_S(g)} (\mathbf{x}_h |_g) \right) \right) \right).
\end{align*}
In the last step, we use that $\tau(\prod_{i \le n} x_i^{e_i}) = \prod_{i \le n} x_i^{e_{i - 1} - e_i}$. Since $\sigma$ and $\tau$ are invertible by Remark \ref{sigma_invertible}, we reconstruct
$$\min_S \left( \left[ \prod_{i \le n} x_i^{e_i} \right] \prod_{g \in S} \sigma^{h_S(g)} (\mathbf{x}_h |_g) \right)$$
for any term $\prod_{i \le n} x_i^{e_i}$.
\end{proof}

\begin{remark} \label{truncate}
Note that one can freely add to or remove from $S$ any vertex $v$ satisfying $h_S(v) + h_v > n$, since this will not change whether the exponent of $x_i$ in the product is $e'_i$. Removing vertices from $S$ is guaranteed to decrease the product; thus, we know that the minimum $S$ has no removable vertices.
\end{remark}

We now reconstruct $\mathbf{x}_{\mathbf{x}_h}$ from the sampling function. Note that Section \ref{step2} is a specific case of the procedure below.

\begin{theorem} \label{two}
The coheight profile profile $\mathbf{x}_{\mathbf{x}_h}$ can be reconstructed from the sampling function $F$.

\end{theorem}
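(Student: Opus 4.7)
The plan is to apply Theorem \ref{conv} to input monomials of the form $x_{n-1}^{k}$, one for each coheight $n-1 \in \{1,2,\ldots,N-1\}$ (with $N$ the number of layers, already read off from $\mathbf{x}_h$ via Theorem \ref{one}) and each $k=1,2,\ldots,m_n$, where $m_n$ is the number of vertices of coheight $n-1$. Fix an enumeration $v_1^{(n)},\ldots,v_{m_n}^{(n)}$ of these vertices in weakly increasing lex order of $\mathbf{x}_h|_v$. The central claim is that
$$\tilde{F}(x_{n-1}^{k}) \;=\; \prod_{i=1}^{k} \mathbf{x}_h|_{v_i^{(n)}}.$$
Granting this, dividing consecutive samples yields $\mathbf{x}_h|_{v_k^{(n)}} = \tilde{F}(x_{n-1}^{k})/\tilde{F}(x_{n-1}^{k-1})$ (with $\tilde{F}(x_{n-1}^0):=1$), so sweeping over all $n$ and $k$ and throwing in $\mathbf{x}_h$ itself for the root recovers the full multiset $\{\mathbf{x}_h|_v : v\in V(T)\}$, which assembles into $\mathbf{x}_{\mathbf{x}_h} = \prod_{v\in V(T)} x_{\mathbf{x}_h|_v}$.

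The heart of the argument is identifying the minimizing multiset $S$ on the right-hand side of Theorem \ref{conv} for the input $x_{n-1}^{k}$. Each factor $\sigma^{h_S(g)}(\mathbf{x}_h|_g)$ involves only variables $x_i$ with $i\ge h_S(g)+h_g$, so the bracket condition (which forces exponent $0$ on every $x_i$ with $i<n-1$) requires every $g\in S$ to satisfy $h_S(g)+h_g\ge n-1$. Remark \ref{truncate} simultaneously guarantees that the minimizer has no vertex with $h_S(g)+h_g>n-1$, so in fact $h_S(g)+h_g = n-1$ for every $g\in S$. The unique element of $V(S_g)$ at coheight $h_g$ is $g$ itself, so each such $g$ contributes exactly one factor of $x_{n-1}$, forcing $|S|=k$. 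To rule out any $g$ with $h_g<n-1$, I follow the chain of ancestors: such a $g$ would have $h_S(g)>0$, forcing some other element $g^*\in S$ lying weakly above $g$ on the root-to-$g$ path (possibly an earlier copy of $g$ itself); iterating, the topmost such element has no $S$-element strictly above it, so its own lowest copy satisfies $h_S+h = h_{g^*} < n-1$, violating the bracket. Hence every $g\in S$ has $h_g=n-1$ and $h_S(g)=0$; in particular $S$ is an honest $k$-element subset of layer $n$ with no repeated vertices.

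With $S$ restricted in this way, the product simplifies to $\prod_{g\in S}\mathbf{x}_h|_g$. Since multiplication by a fixed monomial preserves the lex order on $\langle x_i\rangle_{i\in\mathbb{N}}$, minimizing this product over $k$-element subsets of layer-$n$ vertices reduces to selecting the $k$ vertices of smallest profile, yielding exactly $\prod_{i=1}^{k}\mathbf{x}_h|_{v_i^{(n)}}$ as claimed. I expect the ancestor-chain step to be the main obstacle: the elevation function $h_S$ interacts subtly with multiplicities in $S$, and the claim that no chain of ancestors (and no repeated vertex) can beat an honest antichain of layer-$n$ vertices needs to be handled carefully before the lex-multiplicativity step can close out the argument. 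Everything else is bookkeeping using the monotonicity and invertibility properties of $\sigma$ and $\tau$ established in Section \ref{framework}.
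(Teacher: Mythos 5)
Your proposal is correct and follows the paper's proof exactly: apply Theorem \ref{conv} to $x_n^m$ (your $x_{n-1}^k$), argue that the minimizing $S$ must be the $m$ coheight-$n$ vertices with smallest coheight profiles, divide consecutive samples to isolate each $\mathbf{x}_h|_v$, and assemble $\mathbf{x}_{\mathbf{x}_h}$ layer by layer. Your ancestor-chain argument makes rigorous a step the paper only asserts (``The $[x_n^m]$ condition achieves this''), namely that the bracket condition together with Remark \ref{truncate} forces every element of the minimizing $S$ to lie exactly at coheight $n$ with $h_S = 0$ and no repetitions; this is a genuine gap you have correctly identified and filled.
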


\begin{proof}
By Theorem \ref{conv}, we can reconstruct the following expression for all $n$ and $m$:
$$\tilde{F}(x_n^m) = \min_S \left( [x_n^m] \prod_{g \in S} \sigma^{h_S(g)} (\mathbf{x}_h |_g) \right).$$

The idea is to force $S$ to include no vertices with coheight $< n$ and $m$ vertices with coheight $n$. The $[x_n^m]$ condition achieves this. Then, by taking the minimum, we get rid of anything extra: we know from Remark \ref{truncate} that $S$ need not contain vertices with coheight $> n$, nor any repeats (else $h_S(g) \ne 0$). Then, the minimum will happen when $S = \{{g_i}^1 \mid 1 \le i \le m\}$, where $g_i$ is the vertex with coheight $n$ with the $i$th least coheight profile. Thus, we have

\[
\min_S \left( [x_n^m] \prod_{g \in S} \sigma^{h_S(g)} (\mathbf{x}_h |_g) \right) = \prod_{1 \le i \le m} \mathbf{x}_h |_{g_i}.
\]

Knowing this expression for every value of $m$, we can reconstruct $\mathbf{x}_h |_{g_m}$.

Splitting $\mathbf{x}_{\mathbf{x}_h}$ by coheight, we can write:
$$\mathbf{x}_{\mathbf{x}_h} = \prod_{v \in V(T)} x_{\mathbf{x}_h |_v} = \prod_n \prod_{\substack{v \in V(T) \\ h_v = n}} x_{\mathbf{x}_h |_v}$$
Then, for each $n$, the last product we can reconstruct from the $\mathbf{x}_h |_{g_m}$ that we have reconstructed.
\end{proof}

\begin{theorem} \label{three}
From the sampling function, we can reconstruct $\mathbf{x}_{(3)h}$.
\end{theorem}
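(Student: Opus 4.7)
The plan extends the strategy of Theorem \ref{two}: just as a single ``gap'' at coheight $n$ lets us probe one coheight-$n$ vertex's coheight profile, a simultaneous gap at coheight $n$ together with a second gap at a deeper coheight $c > n$ lets us probe a descendant of that coheight-$n$ vertex. By varying these two-level queries systematically, we will recover enough information to assemble $\mathbf{x}_{(3)h}$.

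By Theorem \ref{two} we already know, for each coheight $n$, the multiset of coheight profiles among coheight-$n$ vertices. Order the coheight-$n$ vertices as $g_1^{(n)}, g_2^{(n)}, \ldots$ by increasing profile $P_j^{(n)} := \mathbf{x}_h |_{g_j^{(n)}}$, breaking ties arbitrarily. Since $\mathbf{x}_{(3)h} = \prod_n \prod_j x_{\mathbf{x}_{\mathbf{x}_h}|_{g_j^{(n)}}}$ and $\mathbf{x}_{\mathbf{x}_h}|_v = \prod_{u \in S_v} x_{\mathbf{x}_h |_u}$, it suffices to determine, for each vertex $v = g_i^{(n)}$ and each coheight $c > n$, the multiset of coheight profiles of descendants of $v$ at coheight $c$.

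For each quadruple $(n, i, c, m)$ with $n < c$ and $i, m \ge 1$, apply Theorem \ref{conv} to the condition
\[
\left[ \phi_c \left( \prod_{j=1}^i P_j^{(n)} \right) \cdot x_{c+1}^{\,a + m} \right],
\]
where $a$ is the exponent of $x_{c+1}$ in $\prod_{j=1}^i P_j^{(n)}$. Via Theorem \ref{formula} and Remark \ref{truncate}, I expect the minimizing $S$ to take the form $S = S_0 \cup S_1$, where $S_0$ is a set of $i$ coheight-$n$ vertices realizing $\{g_1^{(n)}, \ldots, g_i^{(n)}\}$ (up to relabeling of profile-equivalent vertices) and $S_1$ consists of $m$ coheight-$c$ descendants of $S_0$ with the smallest coheight profiles. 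The resulting output equals $\prod_{j=1}^i P_j^{(n)} \cdot \sigma \left( \prod_{k=1}^m Q_k^{(n,i,c)} \right)$, where $Q_k^{(n,i,c)}$ is the $k$-th smallest such descendant profile. Dividing successive outputs over $m$ extracts each $Q_k^{(n,i,c)}$ and hence the multiset $M_{n,i,c}$ of coheight profiles of descendants of $\{g_1^{(n)}, \ldots, g_i^{(n)}\}$ at coheight $c$. Because the subtrees $S_{g_j^{(n)}}$ are pairwise disjoint, the multiset difference $M_{n,i,c} \setminus M_{n,i-1,c}$ equals the multiset of coheight profiles of descendants of $g_i^{(n)}$ at coheight $c$; uniting over all $c > n$ recovers the full descendant profile multiset of $S_{g_i^{(n)}}$, and hence $\mathbf{x}_{\mathbf{x}_h}|_{g_i^{(n)}}$.

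The main obstacle is justifying the canonical form of the minimizing $S$. One must exclude configurations with repeated vertices, with intermediate-coheight vertices having $h_S > 0$, and with an $S_0$ drawn from the ``wrong'' multiset of coheight-$n$ vertices. The conditions $e_i = 0$ for $i < n$, together with $e_i$ matching $\prod_{j=1}^i P_j^{(n)}$ for $n \le i \le c$, Remark \ref{truncate}, and the minimality at the unconstrained positions $x_{c+2}, x_{c+3}, \ldots$, should constrain $S$ tightly enough. Any residual ambiguity when several $g_j^{(n)}$ share a profile is benign, since the final reconstruction depends only on multisets of subtree profiles, which are invariant under permutations of profile-equivalent vertices.
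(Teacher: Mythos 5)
Your proposal follows the same overall strategy as the paper—applying Theorem \ref{conv} with a condition that pins down a product of coheight-$n_0$ profiles and then a free exponent at a deeper level—but it glosses over the two difficulties that are the actual content of the theorem and that the paper spends all of Section \ref{framework2} resolving.

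First, the set $S_1$ of deeper gaps is \emph{not} confined to descendants of $S_0$. By \eqref{eq:s1condition}, $S_1$ consists of vertices $v$ with $h_{S}(v)+h_v=n$, which includes descendants of $S_0$ at coheight $n-1$ (contributing $\sigma(\mathbf{x}_h|_v)$) \emph{and} non-descendants of $S_0$ at coheight $n$ (contributing $\mathbf{x}_h|_v$). The minimizing $S_1$ therefore draws from an interleaved stack of shifted and unshifted profiles, and the successive partial products do not hand you ``the $k$-th smallest descendant profile $Q_k$'' as you claim. Deciding which stack entries are descendants of which $g_i^{(n_0)}$ is exactly what the theorem is trying to determine, not a by-product of the conditioning.

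Second, your hope that the constraints ``should constrain $S$ tightly enough'' to force $S_0=\{g_1^{(n_0)},\dots,g_i^{(n_0)}\}$ is precisely what the paper calls the \emph{swapping problem}, and the paper states it is false. Because the candidate set for $S_1$ depends on $S_0$ through $h_S$, a non-minimal $S_0$ (still satisfying \eqref{eq:s0condition}, say with $m'>0$) can produce a strictly cheaper stack—e.g., a swapped-in coheight-$n_0$ vertex may carry smaller deeper profiles, or pull an expensive coheight-$n$ non-descendant out of the candidate pool—so that the overall product is smaller than with the ``canonical'' $S_0$. Without this, the multiset difference $M_{n,i,c}\setminus M_{n,i-1,c}$ you take is not known to compute the right thing. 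The paper's resolution is the ``predict and verify'' machinery of Section \ref{framework2} (positions, stacks, the nice $m$, Theorems \ref{existsnice}--\ref{existsnice_n}), which locates, one vertex at a time, where $\mathbf{x}_h|_{v_1}$ actually sits by detecting when a prediction fails. Your write-up identifies the places where trouble might lurk but then asserts they are benign; the paper's Theorem \ref{three} is essentially the statement that they are not benign and require this extra argument.
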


\begin{proof} \let\qed\relax
For a certain coheight $n_0$, let $g_i$ be defined as in Theorem \ref{two}. By Theorem \ref{two}, we can reconstruct $\mathbf{x}_h |_{g_i}$ for every $i$. Thus, by Theorem \ref{conv}, we can reconstruct the following for all $m_0, n > n_0$, and $m$:
\begin{equation} \label{eq:threeterm}
\tilde{F} \left( \phi_{n} \left( \prod_{1 \le i \le m_0} \mathbf{x}_h |_{g_i} \right) x_{n}^{m} \right) = \min_S \left( \left[ \phi_{n} \left( \prod_{1 \le i \le m_0} \mathbf{x}_h |_{g_i} \right) x_{n}^{m} \right] \prod_{g \in S} \sigma^{h_S(g)} (\mathbf{x}_h |_g) \right).
\end{equation}

The idea here is to force $S$ to include $g_i \mid 1 \le i \le m_0$, and then in addition include $m$ vertices that satisfy $h_S(v) + h_v = n$. When taking the minimum, we run into the issue that the second requirement interferes with the first, which requires a combinatorial argument to resolve. Once the interference is resolved, we can use a technique similar to that used in the proof of Theorem \ref{two} to reconstruct the coheight profile of each vertex in $V(S_{g_i})$, and compiling all of these together gives us $\mathbf{x}_{(3)h}$.

Considering the $\left[ \phi_{n} \left( \prod_{1 \le i \le m_0} \mathbf{x}_h |_{g_i} \right) x_n^m \right]$ condition, the first nonzero exponent is $x_{n_0}^{m_0}$, which means that $S$ must include $m_0$ vertices with coheight $n_0$. Let $S_0$ be the set of these $m_0$ vertices. Then
\begin{equation} \label{eq:condition}
\phi_n \left( \prod_{g \in S_0} \mathbf{x}_h |_g \right) \le \phi_n \left( \prod_{g \in S} \sigma^{h_S(g)} (\mathbf{x}_h |_g) \right) = \phi_{n} \left( \prod_{1 \le i \le m_0} \mathbf{x}_h |_{g_i} \right) x_n^m,
\end{equation}
where the left inequality comes from $S_0 \subseteq S$ and the right equality comes from the condition. Recall that $\{{g_i}^1 \mid 1 \le i \le m_0\}$ is the $S_0$ with the least possible product of coheight profiles, or in other words
\begin{equation} \label{eq:min}
\min_{S_0} \left( \prod_{g \in S_0} \mathbf{x}_h |_g \right) = \prod_{1 \le i \le m_0} \mathbf{x}_h |_{g_i},
\end{equation}
which implies that
\begin{equation} \label{eq:min_ineq}
\phi_n \left( \prod_{g \in S_0} \mathbf{x}_h |_g \right) \ge \phi_n \left( \prod_{1 \le i \le m_0} \mathbf{x}_h |_{g_i} \right).
\end{equation}
Putting (\ref{eq:condition}) and (\ref{eq:min_ineq}) together, we have that
\begin{equation}
\label{eq:s0condition}
\phi_n \left( \prod_{1 \le i \le m_0} \mathbf{x}_h |_{g_i} \right) \le \phi_n \left( \prod_{g \in S_0} \mathbf{x}_h |_g \right) \le \phi_n \left( \prod_{1 \le i \le m_0} \mathbf{x}_h |_{g_i} \right) x_n^m.
\end{equation}
This forces $\phi_n \left( \prod_{g \in S_0} \mathbf{x}_h |_g \right)$ to be of the form $\phi_n \left( \prod_{1 \le i \le m_0} \mathbf{x}_h |_{g_i} \right) x_n^{m'}$ for some $0 \le m' \le m$.
Now, since by Theorem \ref{two} we know $\mathbf{x}_h |_{g_i}$ for all $i$, we know all the choices we have for $S_0$.

Let $S \setminus S_0 = S_1$. Consider the equality in (\ref{eq:condition}). We can split the left hand side into terms for $S_0$ and $S_1$ as follows:
\begin{equation*}
\phi_n \left( \prod_{1 \le i \le m_0} \mathbf{x}_h |_{g_i} \right) x_n^{m'} \cdot \phi_n \left( \prod_{g \in S_1} \sigma^{h_S(g)} (\mathbf{x}_h |_g) \right) = \phi_{n} \left( \prod_{1 \le i \le m_0} \mathbf{x}_h |_{g_i} \right) x_n^m,
\end{equation*}
which gives us
\begin{equation}
\label{eq:s1condition}
\phi_n \left( \prod_{g \in S_1} \sigma^{h_S(g)} (\mathbf{x}_h |_g) \right) = x_n^{m - m'}.
\end{equation}
As a consequence, the set $S_1$ contains no vertices with $h_S(v) + h_v < n$ and $m - m'$ vertices with $h_S(v) + h_v = n$. In addition, since we are taking a minimum in (\ref{eq:min}), we need not consider any vertices with $h_S(v) + h_v > n$ (Remark \ref{truncate}).

Now, we know that (\ref{eq:threeterm}) is equal to
\begin{equation} \label{eq:threetermconv}
\min_{S = S_0 \cup S_1} \left( \prod_{g \in S} \sigma^{h_S(g)}(\mathbf{x}_h |_g) \right) = \min_{S_0, S_1} \left( \left( \prod_{g \in S_0} \mathbf{x}_h |_g \right) \cdot \left( \prod_{g \in S_1} \sigma^{h_{S}(g)}(\mathbf{x}_h |_g) \right) \right),
\end{equation}
where the minimum is taken over $S_0$ satisfying (\ref{eq:s0condition}) and $S_1$ satisfying (\ref{eq:s1condition}).

One might hope that this choice of $S_0$ would be the same as the choice that produces a minimum value for $\prod_{g \in S_0} \mathbf{x}_h |_g$. If this were the case, then we'd be guaranteed $S_0 = \{{g_i}^1 \mid 1 \le i \le m_0\}$ by Theorem \ref{two}. However, this is not true. Notice that the possibilities for $S_1$ depend on $S_0$ due to the $h_S(v)$ term. Thus, it might be the case that the minimal set $S_1$ of a non-minimal set $S_0$ produces a smaller value than the minimal set $S_1$ of the minimal set $S_0$.
We call this non-minimality issue the ``swapping problem.''

It is worth noting that the $h_S(v)$ term is necessary, because otherwise it would be impossible to determine whether a vertex $v$ is a descendant of a vertex of $S_0$. This would give no additional information past Theorem \ref{two}.

We can still determine the information that we want, which is the coheight profile of each vertex in $V(S_{g_i})$. We pause the proof here to set up another framework in Section \ref{framework2}, which we will use to describe the combinatorial procedure that cleans up our information. We defer the rest of the proof to Section \ref{main2}.

\end{proof}

\section{A framework to resolve Theorem \ref{three}} \label{framework2}

With this framework, we aim to describe the combinatorial procedure that cleans up the information reconstructed in the proof of Theorem \ref{three}. We resolve the swapping problem with a strategy that we term ``predict and verify.''

Recall that the information is as follows: for every choice of $n_0, m_0, n, m$, we know expression (\ref{eq:threetermconv}). Throughout this section, we fix $n_0$. We wish to determine, for each vertex $g_i$ with coheight $n_0$, the coheight profile $\mathbf{x}_h |_v$ of each $v \in V(S_{g_i})$.

Let us first set up some notation.

\begin{itemize}
    \item Let $L_k$ be the set of vertices with coheight $k$.

    \item Let $L_k(g)$ be the set of descendants of $g$ with coheight $k$.
    \item Let $L_k(S_0)$ be the set of descendants of vertices in $S_0$ with coheight $k$.

    \item Let $v_i$ be the element of $L_n$ with the $i$th least coheight profile.
\end{itemize}

We also set up the following definitions.

\begin{definition}
A \textbf{candidate for $S_0$} is a vertex that satisfies condition (\ref{eq:s0condition}). These are the vertices that could possibly be in $S_0$. We denote the set of candidates for $S_0$ by $C_0$.
\end{definition}

\begin{definition}
For a given set $S_0$, a \textbf{candidate for $S_1$} is a vertex that satisfies condition (\ref{eq:s1condition}). These are the vertices that could possibly be in $S_1$. We denote the set of candidates for $S_0$ by $C_1(S_0)$.
\end{definition}

For a given $S_0$, the set $C_1(S_0)$ consists of vertices that satisfy $h_{S_0}(v) + h_v = n$. These include $L_{n - 1}(S_0)$ as well as $L_n \setminus L_n(S_0)$. We invoke induction on $n$ so that our inductive hypothesis is the following: for any vertex $g \in C_0$, we know the coheight profiles of the set $L_{n - 1}(g)$. By Theorem \ref{two}, we also know the coheight profiles of $L_n$. Thus, our task is to determine for any $g \in C_0$ which of the vertices in $L_n$ are in $L_n(g)$. As we are inducting on $n$, we fix $n$ from now on.

This goal can be more easily discussed with the following definition.

\begin{definition}
For each vertex $v_i$, the \textbf{position} of $v_i$ is the vertex $g \in C_0$ such that $v_i \in L_n(g)$.
\end{definition}

With this definition, our goal is to determine the position of each vertex $v_i \in L_n$.

We determine the positions of $v_1, \dots, v_{|L_n|}$ inductively. To determine the position of $v_i$, we make certain choices of $m_0$ and $m$ such that we can predict (\ref{eq:threetermconv}) with our current knowledge, assuming $v_i \not \in L_n(S_0)$ for the predicted $S_0$. Then, we show that $v_i \not \in L_n(S_0)$ if and only if the prediction is correct. In addition, we show that knowing whether $v_i \in L_n(S_0)$ for the sets $S_0$ that are involved in these predictions is sufficient to narrow down $v_i$ to one possible position.

We set up a few definitions to enable us to work with (\ref{eq:threetermconv}) more easily. Note that these definitions are not wholly rigorous; they are meant to be a guide and will be modified throughout the section.

Expression (\ref{eq:threetermconv}) takes the minimum of the product of two terms. The first term is encapsulated in the following definition.

\begin{definition}
The \textbf{padding} on $S_0$, denoted $\mathcal{P}(S_0)$, is $\prod_{g \in S_0} \mathbf{x}_h |_{g}$.
\end{definition}

The second term is encapsulated in the following definition.

\begin{definition}
Fix an $S_0$. Recall that $C_1(S_0) = L_{n - 1}(S_0) \cup L_n \setminus L_n(S_0)$. The \textbf{stack} on $S_0$, denoted $\mathcal{S}(S_0)$, is the sequence defined by the set
$$\{ \sigma^{h_{S_0}(g)}(\mathbf{x}_h |_v) \mid v \in L_{n - 1}(S_0) \cup L_n \setminus L_n(S_0) \}$$
arranged from least to greatest. In the case of equalities, we place elements of $L_{n - 1}(S_0)$ before elements of $L_n \setminus L_n(S_0)$.

We let the $i$th element of $\mathcal{S}(S_0)$ be $\mathcal{S}_i(S_0)$.
\end{definition}

Importantly, notice that we do not currently know all of $\mathcal{S}(S_0)$. We know only the elements that are less than $\mathbf{x}_h |_{v_1}$, because it is uncertain whether $v_1 \in L_n \setminus L_n(S_0)$.

\begin{definition}
The \textbf{$k$th partial product} of the stack on $S_0$, denoted $\prod_{i = 1}^k \mathcal{S}_i(S_0)$, is the product of the first $k$ elements of the stack.
\end{definition}

Using the above definitions, we rewrite (\ref{eq:threetermconv}) as
\begin{equation} \label{eq:news1condition}
\min_{S = S_0 \cup S_1} \left( \prod_{g \in S} \sigma^{h_S(g)}(\mathbf{x}_h |_g) \right) = \min_{S_0, S_1} \left( \mathcal{P}(S_0) \cdot \prod_{i = 1}^{|S_1|} \mathcal{S}_i(S_0) \right).
\end{equation}
We only know the value of this if all the terms $\mathcal{S}_i(S_0)$ for $1 \leq i \leq |S_1|$ are less than $\mathbf{x}_h |_{v_1}$. If any of the terms is bigger than $\mathbf{x}_h |_{v_1}$, then we must consider whether $\mathbf{x}_h |_{v_1}$ is in each stack. Our strategy is to first compare partial products until we have to consider $v_1$, and then determine the stacks containing $v_1$.

We can now define a prediction using the above definitions.

\begin{definition}
Though it cannot be true, assume that $\mathbf{x}_h |_{v_1}$ is in every stack $\mathcal{S}(S_0)$. Then, for a choice of $m_0$ and $m$, we let the \textbf{predicted $S_0$} and \textbf{predicted $S_1$} be the sets $S_0$ and $S_1$ that produce the minimum in (\ref{eq:news1condition}) under the above assumption. We say that the prediction is \textbf{correct} if the value of (\ref{eq:news1condition}) under the above assumption is equal to the actual value of (\ref{eq:news1condition}).
\end{definition}

In the following theorem, we show that it is possible to make a certain prediction whose correctness determines whether $v_1 \in L_n(g)$ for some $g \in C_0$.

\begin{theorem} \label{existsnice}
For $m_0 = 1$, there exists an integer $m$ such that the largest vertex in the predicted $S_1$ is $v_1$.
\end{theorem}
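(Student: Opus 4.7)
The plan is to produce $m$ via an optimization on $C_0$. Under the prediction, every stack $\mathcal{S}(\{g\})$ contains $\mathbf{x}_h|_{v_1}$, so for each $g \in C_0$ I define $r(g)$ as the rightmost position at which $\mathbf{x}_h|_{v_1}$ occurs in $\mathcal{S}(\{g\})$ (the rightmost is used so that any ties with the $L_{n-1}$ contributions are pushed to the $L_n$ copy coming from $v_1$). For $m = r(g)$, the first $m$ entries of $\mathcal{S}(\{g\})$ are all $\leq \mathbf{x}_h|_{v_1}$ with equality at position $m$, so \emph{if} the predicted $S_0$ at that $m$ is forced to be $\{g\}$, then the largest vertex in the predicted $S_1$ is indeed $v_1$. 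The task therefore reduces to exhibiting a single $g \in C_0$ for which the choice $m := r(g)$ makes $g$ the optimal $S_0$.

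My candidate is the pair $(g^*, m^*)$ minimizing $f_g(r(g)) := \mathbf{x}_h|_g \cdot \prod_{i=1}^{r(g)} \mathcal{S}_i(\{g\})$ over $g \in C_0$, and I set $m := m^*$. To verify that $\{g^*\}$ is the predicted $S_0$ at $m^*$, I would rule out every competitor $g' \in C_0$ via a case split on $r(g')$. If $r(g') = m^*$, then $(g', m^*)$ is itself a candidate in the minimization, so minimality of $(g^*, m^*)$ immediately forbids $f_{g'}(m^*) < f_{g^*}(m^*)$. If $r(g') < m^*$, I factor $f_{g'}(m^*) = f_{g'}(r(g')) \cdot \prod_{i = r(g')+1}^{m^*} \mathcal{S}_i(\{g'\})$ and use that each extra factor strictly exceeds $\mathbf{x}_h|_{v_1}$; comparing against $f_{g^*}(m^*)$, whose last $m^* - r(g')$ factors are at most $\mathbf{x}_h|_{v_1}$, pushes the product for $g'$ above that of $g^*$.

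The hard case is $r(g') > m^*$: now the first $m^*$ entries of $\mathcal{S}(\{g'\})$ all lie strictly below $\mathbf{x}_h|_{v_1}$, and the product for $g'$ could look deceptively small. To rule this out I would exploit the structural trade-off between the padding $\mathbf{x}_h|_{g'}$ and the multiset of descendant profiles at coheight $n-1$: a stack with many tiny entries below $\mathbf{x}_h|_{v_1}$ forces $L_{n-1}(g')$ to be large, which in turn inflates $\mathbf{x}_h|_{g'}$ (computable through Theorem \ref{formula}), so that the gain in the stack is offset by the loss in the padding. Using the inductive hypothesis from Section \ref{framework2} --- that profiles in $L_{n-1}(g)$ are known for every $g \in C_0$ --- should allow me to quantify this trade-off precisely and close the case. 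This balancing act is precisely the phenomenon the authors have dubbed the swapping problem, and I expect it to be the central technical hurdle.
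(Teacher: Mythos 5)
Your proposal takes a genuinely different route from the paper, so let me first note the structural divergence and then the gap. The paper does not pick a minimizer all at once; it scans $m = 1, 2, \dots$, defines for each $m$ the minimal gap $g_m$ (the $g \in C_0$ minimizing $\mathcal{P}(g) \cdot \prod_{i=1}^{m}\mathcal{S}_i(g)$ after the lift), and proves by induction on $m$ that the invariant $m_{g_m} \ge m$ always holds: if $m_{g_m} < m$, then combining the defining inequality for $g_{m-1}$ with $\mathcal{S}_m(g_m) \ge \mathbf{x}_h|_{v_1} \ge \mathcal{S}_m(g_{m-1})$ contradicts $g_m$ being the minimizer at $m$. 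Because the critical indices are bounded, the scan must eventually hit $m_{g_m} = m$. That argument uses only the monotonicity of the stacks and never has to compare against a competitor whose critical index is far ahead.

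Your argument instead defines $m^*$ as $r(g^*)$ for a single global minimizer $g^*$ of $f_g(r(g))$ and then tries to show that $\{g^*\}$ is the predicted $S_0$ at $m^*$. Your treatment of $r(g') = m^*$ and $r(g') < m^*$ is fine: since $\mathcal{S}_i(g') > \mathbf{x}_h|_{v_1} \ge 1$ for $i > r(g')$, we get $f_{g'}(m^*) \ge f_{g'}(r(g')) \ge f_{g^*}(m^*)$. But the case $r(g') > m^*$ is a genuine gap, and your own text concedes it. From minimality you only get $f_{g'}(r(g')) \ge f_{g^*}(m^*)$, and dividing out $\prod_{i=m^*+1}^{r(g')} \mathcal{S}_i(g')$, each factor of which is $\le \mathbf{x}_h|_{v_1}$, yields merely $f_{g'}(m^*) \ge f_{g^*}(m^*) / (\mathbf{x}_h|_{v_1})^{\,r(g')-m^*}$, which is strictly weaker than what you need. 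The trade-off you gesture at (many small coheight-$(n-1)$ entries in $\mathcal{S}(g')$ forcing a large padding $\mathbf{x}_h|_{g'}$) is not a clean lexicographic inequality: condition (\ref{eq:s0condition}) already pins down $\phi_{n-1}(\mathbf{x}_h|_{g'}) = \phi_{n-1}(\mathbf{x}_h|_{g_1})$ for every candidate, so the paddings differ only at $x_n$ and beyond, and nothing in the framework bounds that tail by the number of coheight-$(n-1)$ descendants. As written, the proposal does not establish the theorem. I would recommend either proving that trade-off inequality precisely (I am skeptical it holds in general) or replacing the single global minimization with the paper's incremental invariant, which avoids ever having to resolve this comparison.
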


\begin{proof}
Though it cannot be true, assume throughout this proof that $\mathbf{x}_h |_{v_1}$ is in every stack $\mathcal{S}(g)$.

The predicted $S_0$ must consist of a single vertex $g \in C_0$.

For each $m$, this $g$ must be the $g \in C_0$ that gives the smallest $\mathcal{P}(g) \cdot \prod_{i = 1}^{m - m'} \mathcal{S}_i(g)$. Note that $m'$ is a function of $g$, so our partial products are not lined up. In order to line them up, we lift each stack $\mathcal{S}(g)$ up by $m'$ elements; that is, we increase the index of each element in $\mathcal{S}(g)$ by $m'$. We leave the $k$th partial product for each $k < m'$ undefined.

For each $g \in C_0$, we want to consider $m$ for which $\mathcal{S}_m(g) = \mathbf{x}_h |_{v_1}$;

specifically the largest such $m$, since we want to pick out $\mathbf{x}_h |_{v_1}$ from other equal elements (remember that in the case of equalities, we place elements of $L_{n - 1}(S_0)$ before elements of $L_n \setminus L_n(S_0)$). Thus we make the following definitions:

\begin{definition}
For a positive integer $m$, the \textbf{minimal gap at $m$}, denoted $g_m$, is the $g \in C_0$ that minimizes the number $\mathcal{P}(g) \cdot \prod_{i = 1}^{m} \mathcal{S}_i(g)$.
\end{definition}

\begin{definition}
The \textbf{critical index} of $g$, denoted $m_g$, is the largest $m$ for which $\mathcal{S}_m(g) = \mathbf{x}_h |_{v_1}$.
\end{definition}

To prove Theorem \ref{existsnice}, we want to show that for some $m$, the predicted $S_1$ has largest vertex $v_1$. In terms of the above definitions, this $m$ needs to satisfy two criteria: it's the critical index of some $g \in C_0$ (so that the largest vertex of the predicted $S_1$ is $v_1$), and this $g$ is the minimal gap at $m$ (so that this $g$ is actually the predicted). Thus, we want to show that some $m$ satisfies $m_{g_m} = m$.

We begin from $m = 1$ and increment upward. At every step, we have three possibilities:

\begin{enumerate}
    \item $m_{g_m} < m$
    \item $m_{g_m} = m$
    \item $m_{g_m} > m$
\end{enumerate}

If 2) $m_{g_m} = m$ is true, then we are done, and we stop the procedure. Thus, the procedure only continues if 1) $m_{g_m} < m$ or 3) $m_{g_m} > m$ is true. We claim that 1) $m_{g_m} < m$ is never true. Since 3) $m_{g_m} > m$ cannot be true for the maximal critical index, the procedure must eventually stop.

To show that 1) $m_{g_m} < m$ is never true, we proceed by induction. The statement 1) is trivially false for the first critical index. For the inductive step, suppose that 1) $m_{g_m} < m$ is true. Let us now consider $m - 1$.

By the inductive hypothesis, we must have 3) $m_{g_{m - 1}} > m - 1$.

By the definition of $g_{m - 1}$, we know that
\begin{equation} \label{eq:inductive_hyp}
\mathcal{P}(g_m) \cdot \prod_{i = 1}^{m - 1} \mathcal{S}_i(g_m) \ge \mathcal{P}(g_{m - 1}) \cdot \prod_{i = 1}^{m - 1} \mathcal{S}_i(g_{m - 1}).
\end{equation}
Now, let us look at the definition of critical index.
\begin{itemize}
    \item Since $m_{g_m}$ is the critical index of $g_m$, we know that $\mathcal{S}_i(g_m) \ge \mathbf{x}_h |_{v_1}$ for $i \ge m_{g_m}$. We assumed above that $m_{g_m} < m$, so we know that $i \ge m_{g_m}$ is sufficient for $i \ge m$.
    \item Since $m_{g_{m - 1}}$ is the critical index of $g_{m - 1}$, we know that $\mathcal{S}_i(g_{m - 1}) \le \mathbf{x}_h |_{v_1}$ for $i \le m_{g_{m - 1}}$. We determined above that $m_{g_{m - 1}} > m - 1$, so we know that $i \le m_{g_{m - 1}}$ is sufficient for $i \le m$.
\end{itemize}
The two conditions overlap at $i = m$. Thus, we know that $\mathcal{S}_m(g_m) \ge \mathbf{x}_h |_{v_1} \ge \mathcal{S}_m(g_{m - 1})$. Multiplying (\ref{eq:inductive_hyp}) with the above, we get
$$\mathcal{P}(g_m) \cdot \prod_{i = 1}^{m} \mathcal{S}_i(g_m) \ge \mathcal{P}(g_{m - 1}) \cdot \prod_{i = 1}^{m} \mathcal{S}_i(g_{m - 1}).$$
This contradicts the definition of $g_m$. Thus, 1) could not have been true. This completes the proof of Theorem \ref{existsnice}.

Notice that in making our prediction, we did not need to know any elements $\mathcal{S}_i(S_0)$ larger than $\mathbf{x}_h |_{v_1}$; we just needed to know that they were larger.
\end{proof}

The following definition will allow us to discuss the value of $m$ determined in Theorem \ref{existsnice}.

\begin{definition}
Theorem \ref{existsnice} states the existence of an integer $m$ such that the largest vertex in the predicted $S_1$ is $v_1$. Let this $m$ be the \textbf{nice $m$}. For $m_0 = 1$ and the nice $m$, the predicted $S_0$ contains one element $g$. Let this $g$ be the \textbf{nice $g$}.
\end{definition}

\begin{remark} \label{newordering}
Theorem \ref{existsnice} produces an ordering on the $g \in C_0$ as follows. We let the smallest $g$ be the nice $g$. Then, remove this $g$. We can apply Theorem \ref{existsnice} again to get another nice $g$. Let this be the second smallest $g$. We proceed in this way until all the $g \in C_0$ are used up, and this produces an ordering on the vertices $g$. This will be important later.
\end{remark}

Now, we want to show that for $m_0 = 1$ and the nice $m$, the correctness of the prediction determines whether $v_1 \in L_n(g)$ for some $g \in C_0$.

\begin{theorem} \label{prediction}
For $m_0 = 1$ and the nice $m$, the prediction is correct if and only if $v_1$ is not in $L_n(g)$, where $g$ is the nice $g$.
\end{theorem}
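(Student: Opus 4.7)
The plan is to compare the actual value of the expression in (\ref{eq:news1condition}) against the value $V_{\text{pred}}$ predicted at the nice $g$ with the nice $m$. A first observation is the following monotonicity lemma: for every $g' \in C_0$, the actual quantity $\mathcal{P}(\{g'\}) \cdot \prod_{i=1}^{m-m'} \mathcal{S}_i(\{g'\})$ is at least as large as the quantity the prediction assigns to $\{g'\}$. The reason is that the prediction pretends $\mathbf{x}_h |_{v_1}$ sits inside every stack $\mathcal{S}(\{g'\})$; when in fact $v_1 \in L_n(g')$, the entry $\mathbf{x}_h |_{v_1}$ is absent from the true stack, and each partial product that would have used it must absorb a strictly larger element in its place. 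In particular, $V_{\text{pred}}$ is a lower bound for the actual minimum.

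For the ($\Leftarrow$) direction, assume $v_1 \notin L_n(g)$ for the nice $g$. Then the assumption underlying the prediction is truthful at $S_0 = \{g\}$, so the genuine value of (\ref{eq:news1condition}) evaluated at the nice pair $(S_0, S_1)$ coincides with $V_{\text{pred}}$. Combined with the lower bound above, this forces the actual minimum to equal $V_{\text{pred}}$, and the prediction is correct.

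For the ($\Rightarrow$) direction I would argue the contrapositive. Assume $v_1 \in L_n(g)$. At $S_0 = \{g\}$, the nice $m$ was chosen precisely so that $\mathbf{x}_h |_{v_1}$ appears as the largest element of the predicted partial product; since this element is actually missing from $\mathcal{S}(g)$, it must be replaced by a strictly larger stack entry, so the actual value at $\{g\}$ strictly exceeds $V_{\text{pred}}$. For any other $g' \in C_0$, the defining minimality of the nice $g$ gives a predicted value at $\{g'\}$ that is already strictly above $V_{\text{pred}}$, and by the monotonicity of the first paragraph the actual value at $\{g'\}$ is also strictly above $V_{\text{pred}}$. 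Hence the actual minimum strictly exceeds $V_{\text{pred}}$, and the prediction is incorrect.

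The hard part will be making the monotonicity claim rigorous when $v_1$ sits in the interior of some alternative stack $\mathcal{S}(\{g'\})$: one must verify that deleting the single entry $\mathbf{x}_h |_{v_1}$ and reindexing the stack from least to greatest never decreases the partial product up to index $m - m'$, which reduces to bookkeeping on the monotone ordering of $\mathcal{S}(\{g'\})$. A secondary subtlety is the strict uniqueness of the nice $g$ as the predicted minimizer; ties must be ruled out by the convention of placing elements of $L_{n-1}(S_0)$ before those of $L_n \setminus L_n(S_0)$ in the stack, and only with that tie-breaking rule in hand does the contrapositive direction close cleanly.
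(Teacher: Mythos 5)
Your argument is essentially the same strategy as the paper's, but organized more cleanly around an explicit one-sided bound: the prediction pretends $\mathbf{x}_h|_{v_1}$ is in every stack, which can only add a small element, so the predicted value at each singleton $\{g'\}$ is a lower bound for the actual one; this makes the $(\Leftarrow)$ direction an immediate sandwich argument. The paper instead proves $(\Leftarrow)$ by arguing that the entire sequence of minimizers $g_m$ is unchanged when the assumption is dropped (since $g_m \ne g_0$ for $m \geq m_{g_0}$), which is a stronger robustness claim reused implicitly later. Your monotonicity lemma is the right reduction and the bookkeeping you flag (removing one entry from a sorted multiset can only increase each prefix product) is indeed all that is needed.

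For $(\Rightarrow)$, you correctly identify that you need strict minimality of the nice $g$, but the fix you propose is misdirected. The tie-breaking convention that places $L_{n-1}(S_0)$ before $L_n \setminus L_n(S_0)$ only resolves ties \emph{within} a single stack $\mathcal{S}(S_0)$; it says nothing about ties between two distinct candidates $g, g' \in C_0$ whose padding-times-partial-product agree. If such a tie occurred with $v_1 \in L_n(g)$ and $v_1 \notin L_n(g')$, then the actual value at $\{g'\}$ would still equal $V_{\mathrm{pred}}$ and the prediction would be numerically correct, contradicting the theorem as stated. The right observation is that such a tie forces $g$ and $g'$ to head interchangeable subtrees, so assigning $v_1$ to either yields an isomorphic reconstruction and the algorithm remains sound; but that argument is not in your write-up, and (to be fair) the paper's own one-sentence proof of this direction is silent on the point as well. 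Also, your phrase ``must absorb a strictly larger element'' overstates the general monotonicity lemma — the replacement entry can equal $\mathbf{x}_h|_{v_1}$ for generic $m$ — although strictness does hold at the critical index $m_g$ precisely because it is taken to be the \emph{largest} index achieving that value, which is the case you actually need.
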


\begin{proof}
Let $g_0$ be the position of $v_1$.

If the nice $g$ is $g_0$, then the prediction will be incorrect because $\mathbf{x}_h |_{v_1}$ is not actually in $\mathcal{S}(g)$, as we had assumed.

If the nice $g$ is not $g_0$, then the only change that comes about from assuming $\mathbf{x}_h |_{v_1}$ is in every stack is that the $i$th partial product of $\mathcal{S}(g_0)$ increases for $i$ where $\mathcal{S}_i(g_0) \ge \mathbf{x}_h |_{v_1}$. This would not change any of the predictions for $g_m$: for $m < m_{g_0}$, nothing changes, and for $m > m_{g_0}$, we know that $g_m \ne g_0$ (otherwise $m_{g_m} < m$, contradicting the claim within Theorem \ref{existsnice}) and so increasing a partial product of $\mathcal{S}(g_0)$ would not change predictions for $g_m$. Thus, the prediction would be correct.
\end{proof}

We generalize to $m_0 \ne 1$.

\begin{theorem} \label{existsnice_n}
For fixed $m_0 \ne 1$, there exists an $m$ such that the predicted $S_1$ has largest vertex $v_1$, and in addition, the predicted $S_0$ consists of the $m_0$ least $g \in C_0$.
\end{theorem}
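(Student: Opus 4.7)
The plan is to induct on $m_0$, generalizing the machinery of Theorem \ref{existsnice}. The base case $m_0 = 1$ is Theorem \ref{existsnice} itself. For the inductive step, I assume the statement holds for every smaller value, so the ordering of Remark \ref{newordering} has already produced the $m_0 - 1$ least elements $g^{(1)}, \dots, g^{(m_0 - 1)}$ of $C_0$.

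I would then restrict attention to the family $\mathcal{F}$ of size-$m_0$ subsets $S_0 \subseteq C_0$ containing $\{g^{(1)}, \dots, g^{(m_0 - 1)}\}$; each member of $\mathcal{F}$ is specified by a single additional vertex $g \in C_0 \setminus \{g^{(1)}, \dots, g^{(m_0 - 1)}\}$. For each $m$, define $S_{0,m}$ to be the member of $\mathcal{F}$ minimizing $\mathcal{P}(S_0) \cdot \prod_{i = 1}^{m - m'(S_0)} \mathcal{S}_i(S_0)$, after aligning partial products by lifting each stack $\mathcal{S}(S_0)$ up by $m'(S_0)$ indices, and define the critical index $m_{S_0}$ as the largest index at which $\mathcal{S}_i(S_0) = \mathbf{x}_h |_{v_1}$. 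I would then run the trichotomy-and-increment procedure of Theorem \ref{existsnice} verbatim: incrementing $m$ upward, consider the three cases $m_{S_{0,m}} < m$, $= m$, $> m$; stop at case 2; and rule out case 1 by induction on $m$ using the same partial-product inequality. This yields a value of $m$ for which the member of $\mathcal{F}$ achieving the minimum has its $m$th stack entry equal to $\mathbf{x}_h |_{v_1}$, so the predicted $S_1$ relative to $\mathcal{F}$ has largest vertex $v_1$.

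The remaining step, and the main obstacle, is to show that this minimum within $\mathcal{F}$ coincides with the minimum over \emph{all} size-$m_0$ subsets of $C_0$, so that the predicted $S_0$ is indeed $\{g^{(1)}, \dots, g^{(m_0)}\}$, where $g^{(m_0)}$ is the extra vertex just singled out. The approach is a swap argument: given any size-$m_0$ subset of $C_0$ that omits some $g^{(j)}$ with $j < m_0$, I would swap one of its elements for $g^{(j)}$ and show that the quantity $\mathcal{P}(S_0) \cdot \prod_{i = 1}^{m - m'(S_0)} \mathcal{S}_i(S_0)$ does not increase. The subtlety is that the alignment $m'(S_0)$ depends on $S_0$, so the swap shifts the range of the partial product being compared; handling this requires invoking the inductive hypothesis, which encodes precisely the minimality that selected each $g^{(j)}$ as the $j$th nice vertex, together with a careful tracking of how $\mathcal{S}(S_0)$ transforms under the swap. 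Once this swap lemma is in place, the minimum is forced into $\mathcal{F}$, and the extra vertex achieving the minimum within $\mathcal{F}$ is by construction $g^{(m_0)}$, the next element in the ordering of Remark \ref{newordering}, completing the inductive step.
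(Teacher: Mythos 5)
Your overall architecture (induct on $m_0$, pin down the family $\mathcal{F}$ of sets containing the $m_0-1$ nice vertices already determined, re-run the trichotomy inside $\mathcal{F}$, then argue the global minimum falls inside $\mathcal{F}$) is a genuinely different decomposition from the paper's. The paper does not re-run a trichotomy at all and does not swap one element at a time: it fixes $M$ to be the $m_0$ least elements of $C_0$ under the Remark~\ref{newordering} ordering, takes $m = m_M$ (the critical index of $M$), and directly proves $M$ is the minimal gap set at $m_M$ by pairing the $i$th least element of $M$ with the $i$th least element of an arbitrary competitor $S_0$, multiplying together the $m_0$ single-vertex inequalities furnished by Theorem~\ref{existsnice} applied at each stage of Remark~\ref{newordering}, and then converting the resulting product of per-vertex partial products into partial products of the merged stacks via the identity $\mathcal{S}(S_0) = \bigcup_{g \in S_0} \mathcal{S}(g)$ (restricted to entries $\le \mathbf{x}_h|_{v_1}$), at the cost of a bookkeeping factor of $(\mathbf{x}_h|_{v_1})^{m_0-1}$. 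That all-at-once multiplication is what lets the paper bypass the alignment-shift problem entirely.

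The gap in your write-up is exactly the piece you flag as the ``main obstacle'': the swap lemma is asserted, not proved, and your sketch of how to prove it (``invoke the inductive hypothesis \dots together with a careful tracking of how $\mathcal{S}(S_0)$ transforms under the swap'') is precisely where all the content lives. A single swap $S_0 \mapsto (S_0\setminus\{g'\})\cup\{g^{(j)}\}$ changes the merged stack nontrivially, and the partial-product range $m - m'(S_0)$ shifts with it; showing monotonicity of $\mathcal{P}(S_0)\cdot\prod_{i=1}^{m-m'(S_0)}\mathcal{S}_i(S_0)$ under a single swap is not obviously easier than the full claim, and you have not exhibited the inequality. To close the gap you need the merge observation that, up to entries at most $\mathbf{x}_h|_{v_1}$, the stack of a set is the multiset union of the stacks of its elements; once you have that, the cleanest route is to abandon the one-swap-at-a-time plan and multiply all $m_0$ pairwise inequalities simultaneously, as in the paper. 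Absent that, your proposal establishes the structure of the argument but not its crux.
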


\begin{proof}
Though it cannot be true, assume throughout this proof that $\mathbf{x}_h |_{v_1}$ is in every stack $\mathcal{S}(S_0)$.

Let us begin with new definitions of minimal gap (set) and critical index:

\begin{definition}
For a positive integer $m$, the \textbf{minimal gap set} at $m$ is the set $S_0$ of $m_0$ elements $g \in C_0$ that minimizes $\mathcal{P}(S_0) \cdot \prod_{i = 1}^{m} \mathcal{S}_i(S_0)$.
\end{definition}
\begin{definition}
The \textbf{critical index} of $S_0$, denoted $m_{S_0}$, is the largest $m$ for which $\mathcal{S}_m(S_0) = \mathbf{x}_h |_{v_1}$.
\end{definition}

In addition, let $M$ be the set containing the $m_0$ least elements $g \in C_0$. We claim that $M = g_{m_M}$.

Consider any other set $S_0 \ne M$ of $m_0$ elements $g \in C_0$. We pair $g \in M$ with $g' \in S_0$ if they are both the $i$th least element in their respective sets. By the definition of $M$, we know that the pairs $(g, g')$ satisfy
$$\mathcal{P}(g) \cdot \prod_{i = 1}^{m_g} \mathcal{S}_i(g) \le \mathcal{P}(g') \cdot \prod_{i = 1}^{m_g} \mathcal{S}_i(g').$$

Multiplying the equations together for all $g$, we get
$$\mathcal{P}(M) \cdot \prod_{(g, g')} \prod_{i = 1}^{m_g} \mathcal{S}_i(g) \le \mathcal{P}(S_0) \cdot \prod_{(g, g')} \prod_{i = 1}^{m_g} \mathcal{S}_i(g').$$

Since the left hand side of the above inequality contains exactly the terms of $\mathcal{S}_i(g)$ ($g \in M$) that are less than or equal to $\mathbf{x}_h |_{v_1}$, it is equal to $\mathcal{P}(M) \cdot \prod_{i = 1}^{m_M} \mathcal{S}_i(M) \cdot (\mathbf{x}_h |_{v_1})^{m_0 - 1}$. For the right hand side, if we only consider elements $\le \mathbf{x}_h |_{v_1}$, notice that $\mathcal{S}(S_0) = \bigcup_{g \in S_0} \mathcal{S}(g)$. Thus, we must have that $\prod_{(g, g')} \prod_{i = 1}^{m_g} \mathcal{S}_i(g') \le \prod_{i = 1}^{m_M} \mathcal{S}_i(S_0) \cdot (\mathbf{x}_h |_{v_1})^{m_0 - 1}$.
\end{proof}

\begin{theorem}
Theorems \ref{existsnice}-\ref{existsnice_n} are also true if $v_1$ is replaced with $v_i$, for any $i$.
\end{theorem}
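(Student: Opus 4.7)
The plan is to prove this generalization by induction on $i$, with the base case $i = 1$ being exactly Theorems \ref{existsnice}, \ref{prediction}, and \ref{existsnice_n}. The key observation driving the inductive step is that once the positions of $v_1, \ldots, v_{i-1}$ have been determined, we know exactly which of these vertices belong to $L_n \setminus L_n(S_0)$ for every candidate set $S_0$. Consequently, the portion of each stack $\mathcal{S}(S_0)$ consisting of elements strictly less than $\mathbf{x}_h |_{v_i}$ is now fully known, and the sole remaining uncertainty has shifted from ``is $\mathbf{x}_h |_{v_1}$ in the stack'' to ``is $\mathbf{x}_h |_{v_i}$ in the stack.''

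With this in hand, I would redefine the auxiliary notions by substituting $v_i$ for $v_1$ throughout: the critical index $m_{S_0}$ becomes the largest $m$ with $\mathcal{S}_m(S_0) = \mathbf{x}_h |_{v_i}$, and the predicted $S_0$, $S_1$ are computed under the (false but tractable) assumption that $\mathbf{x}_h |_{v_i}$ appears in every stack $\mathcal{S}(S_0)$. All partial-product comparisons below $\mathbf{x}_h |_{v_i}$ are then computable from the inductive data.

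With these redefined notions, the three theorems port over with only cosmetic changes. For the analogue of Theorem \ref{existsnice}, the induction on $m$ from $m = 1$ upward and the argument that option (1) $m_{g_m} < m$ never occurs depend only on monotonicity of partial products and on the overlap at index $i = m$ between the two critical-index inequalities; both continue to work because the stack entries below $\mathbf{x}_h |_{v_i}$ are exactly known. For the analogue of Theorem \ref{prediction}, the same dichotomy holds: if the nice $g$ is the actual position $g_0$ of $v_i$, then the prediction fails since $\mathbf{x}_h |_{v_i}$ is truly absent from $\mathcal{S}(g_0)$; otherwise, the false assumption only inflates partial products of the single stack $\mathcal{S}(g_0)$ past its own critical index, and the same argument from Theorem \ref{prediction} shows this cannot alter the minimal-gap selections we actually consult. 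For the analogue of Theorem \ref{existsnice_n}, the pairing argument comparing a set $M$ of the $m_0$ least $g \in C_0$ against an arbitrary $m_0$-element $S_0$ is structural and depends only on the sub-$\mathbf{x}_h |_{v_i}$ portion of each stack, so it transfers directly.

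The main obstacle I anticipate is bookkeeping around ties. Several $v_j$ with $j < i$, or various elements of $L_{n-1}(S_0)$, may share the value $\mathbf{x}_h |_{v_i}$, and to make ``the largest $m$ with $\mathcal{S}_m(S_0) = \mathbf{x}_h |_{v_i}$'' unambiguously pick out $v_i$ rather than a previously resolved vertex or an $L_{n-1}$ descendant, one must fix a consistent tie-breaking rule in the stack ordering. The natural choice is to place all ``known'' entries (those $v_j$ with $j < i$ and all elements of $L_{n-1}(S_0)$) before the unresolved copy of $v_i$ in the stack. This is routine but deserves explicit verification, since the correctness of the prediction argument rests on the critical index isolating the single unresolved occurrence of $\mathbf{x}_h |_{v_i}$ in each stack.
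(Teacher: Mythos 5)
Your treatment of Theorems \ref{existsnice} and \ref{prediction} matches the paper: once the positions of $v_1, \dots, v_{i-1}$ are known, each stack $\mathcal{S}(g)$ is determined up to the value $\mathbf{x}_h |_{v_i}$, and the only remaining uncertainty is whether $\mathbf{x}_h |_{v_i}$ lies in a given stack, so the critical-index and prediction arguments carry over after substituting $v_i$ for $v_1$.

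For Theorem \ref{existsnice_n}, however, you assert that the pairing argument ``transfers directly'' because it ``depends only on the sub-$\mathbf{x}_h |_{v_i}$ portion of each stack,'' and that assertion is exactly where the difficulty lies. In the $v_1$ case all stacks look identical below $\mathbf{x}_h |_{v_1}$ modulo the shift; but for $i > 1$ each value $\mathbf{x}_h |_{v_j}$ ($j < i$) now genuinely sits in every stack \emph{except} the one $\mathcal{S}(g_0)$ where $g_0$ is the known position of $v_j$. This asymmetry breaks the bookkeeping in the multiplication-of-pairs step: the product $\prod_{g \in M} \prod_{k \le m_g} \mathcal{S}_k(g)$ picks up a number of copies of $\mathbf{x}_h |_{v_j}$ that depends on whether $g_0$ lies in $M$ or in $S_0$, and the clean factor of $(\mathbf{x}_h |_{v_1})^{m_0-1}$ that appears in the original proof does not drop out unchanged. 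The paper resolves this with a concrete normalization you are missing: delete $\mathbf{x}_h |_{v_j}$ from every stack that contains it, and for the single stack $\mathcal{S}(g_0)$ that never contained it, divide the padding $\mathcal{P}(g_0)$ by $\mathbf{x}_h |_{v_j}$. This makes all stacks symmetric again below $\mathbf{x}_h |_{v_i}$ while preserving the relevant order comparisons, and only \emph{then} does the $v_1$ pairing argument apply verbatim. Your identified obstacle, tie-breaking, is a real but secondary concern and is already handled by the stack's stated convention of placing $L_{n-1}(S_0)$ elements before $L_n \setminus L_n(S_0)$ elements; the padding-compensated deletion is the essential missing idea.
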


\begin{proof}
Generalized Theorem \ref{existsnice} and \ref{prediction} work trivially for general $v_i$: since we already know the position of $v_j$ for $j < i$, we know every stack $\mathcal{S}(g)$ up until $\mathbf{x}_h |_{v_i}$, which is enough to predict whether the position of $v_i$ is $g$.

For generalized Theorem \ref{existsnice_n}, we apply a transformation to the stacks and then proceed in a similar fashion to Theorem \ref{existsnice_n}. The transformation is as follows:

Delete $\mathbf{x}_h |_{v_j}$ from each stack. As the stacks that we consider are up to $\mathbf{x}_h |_{v_i}$, every $\mathbf{x}_h |_{v_j}$ for $j < i$ is guaranteed to be included, so the deletion operation preserves the inequalities in Theorem \ref{existsnice_n}. For the unique stack $\mathcal{S}(g_0)$ that we have already determined does not contain $\mathbf{x}_h |_{v_j}$, we divide the padding $\mathcal{P}(g_0)$ by $\mathbf{x}_h |_{v_j}$. This preserves the rule that $\mathbf{x}_h |_{v_j} \in \mathcal{S}(S_0)$ if and only if $g_0 \not \in S_0$.
\end{proof}

\begin{theorem}
The sets $S_0$ we predict are sufficient to narrow down $v_i$ to one position.
\end{theorem}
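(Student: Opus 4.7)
The plan is to induct on $i$, determining the position of $v_i$ assuming the positions of $v_1, \ldots, v_{i-1}$ have already been pinned down. So fix $i$ and assume those positions are known; by Theorem \ref{two} we already know every $\mathbf{x}_h|_{v_j}$, and by the inductive hypothesis we know which of those $v_j$ (for $j<i$) sits below which $g\in C_0$, so every stack $\mathcal{S}(S_0)$ is fully specified through all entries that are strictly less than $\mathbf{x}_h|_{v_i}$. This is exactly the amount of information the ``nice $m$'' constructions need.

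For each $m_0 \in \{1, 2, \ldots, |C_0|\}$, I would invoke the $v_i$-generalization of Theorem \ref{existsnice_n} (or Theorem \ref{existsnice} in the case $m_0 = 1$) to produce an integer $m$ for which the predicted $S_0$ is exactly
\[
S_0^{(m_0)} := \{g^{(1)}, g^{(2)}, \ldots, g^{(m_0)}\},
\]
the set of the $m_0$ smallest candidates in the ordering of $C_0$ from Remark \ref{newordering}. These predicted sets form a strictly nested chain
\[
\varnothing \subsetneq S_0^{(1)} \subsetneq S_0^{(2)} \subsetneq \cdots \subsetneq S_0^{(|C_0|)} = C_0,
\]
each consecutive pair differing by exactly one candidate.

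For each such $m_0$, I would read the actual value of (\ref{eq:threetermconv}) off the sampling function and compare it with the prediction; by the $v_i$-generalization of Theorem \ref{prediction}, agreement happens if and only if the position of $v_i$ lies outside $S_0^{(m_0)}$. Walking through $m_0 = 1, 2, \ldots$ in order, the answers must switch from ``correct'' to ``incorrect'' exactly once, say at $m_0 = k$. Because $S_0^{(k)} \setminus S_0^{(k-1)} = \{g^{(k)}\}$, the position of $v_i$ is forced to be $g^{(k)}$, which closes the induction step on $i$.

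The main obstacle I foresee is not in the identification argument itself, which is essentially a telescoping elimination, but in keeping the inductive bookkeeping honest: to legitimately apply the $v_i$-generalizations of Theorems \ref{existsnice_n} and \ref{prediction}, the portion of every stack strictly below $\mathbf{x}_h|_{v_i}$ must genuinely be known in advance, and this is precisely what the outer induction over $i$ together with Theorem \ref{two} provides. A minor boundary point is that $k$ must actually land in $\{1, \ldots, |C_0|\}$; this is automatic, since $v_i$ has a unique ancestor of coheight $n_0$ and that ancestor is itself a candidate in $C_0$, so some prediction in the chain is guaranteed to fail before $m_0$ is exhausted.
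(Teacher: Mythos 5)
Your proposal is correct and follows essentially the same route as the paper: walk up $m_0 = 1, 2, \dots$, using the nice $m$ from Theorems \ref{existsnice}/\ref{existsnice_n} and the criterion from Theorem \ref{prediction} at each step, and locate $v_i$ at the index where the prediction first fails. You merely make explicit a few points the paper leaves implicit (the nested chain $S_0^{(1)} \subsetneq \cdots \subsetneq S_0^{(|C_0|)}$, the guarantee that a failure occurs before $m_0$ is exhausted, and the outer induction on $i$ giving knowledge of each stack below $\mathbf{x}_h|_{v_i}$).
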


\begin{proof}
By generalized Theorem \ref{prediction}, the position of $v_1$ is one of the $m_0$ least elements of $C_0$ if and only if the prediction for $m_0$ is incorrect.

Thus, we can determine the position of $v_1$ as follows. We check our predictions for $m_0 = 1, 2, \dots$ until we get one that is incorrect: let this be $m_f$. Then the position of $v_1$ must be one of the $m_f$ least elements of $C_0$, and it cannot be any of the $m_f - 1$ least elements of $C_0$. Thus, it must be precisely the $m_f$th least element of $C_0$.
\end{proof}

\section{Reconstructing the tree, continued}
\label{main2}

\begin{proof}[Proof of Theorem \ref{three}, continued]
Now, we know the coheight profiles of every vertex in $V(S_{g_i})$ for each $g_i$. Thus, we know the coheight profile profile $\mathbf{x}_{\mathbf{x}_h} |_{g_i}$ of each $g_i$.

We can proceed to find $\mathbf{x}_{(3)h}$ for the entire tree via the definition:
$$\mathbf{x}_{(3)h} = \prod_{v \in V(T)} \mathbf{x}_{\mathbf{x}_h} |_v.$$
\end{proof}

\begin{theorem} \label{general}
From the sampling function, we can reconstruct $\mathbf{x}_{(N)h}$ for any positive integer $x$.
\end{theorem}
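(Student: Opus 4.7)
The plan is to proceed by induction on $N$, with the base cases $N \le 3$ covered by Theorems \ref{one}, \ref{two}, and \ref{three}. Suppose inductively that $\mathbf{x}_{(N-1)h}$ has been reconstructed together with its localized versions $\mathbf{x}_{(N-1)h}|_v$ for every vertex $v$; equivalently, for every vertex $g$ at every coheight we know the $(N-1)$-fold nested coheight profile of the subtree $S_g$, and in particular each $\mathbf{x}_h|_{g_{k,i}}$, where $g_{k,i}$ denotes the $i$-th least vertex of coheight $n_k$.

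To reach $\mathbf{x}_{(N)h}$, I would deepen the strategy of Theorem \ref{three}. Fix a chain of coheights $n_0 < n_1 < \cdots < n_{N-2}$ together with multiplicities $m_0, \ldots, m_{N-2}$, and apply Theorem \ref{conv} at the condition
\[
\phi_{n_{N-2}}\Bigl(\prod_{k=0}^{N-3}\prod_{1 \le i \le m_k} \mathbf{x}_h|_{g_{k,i}}\Bigr) \cdot x_{n_{N-2}}^{m_{N-2}}.
\]
This forces the minimizing multiset $S$ in Theorem \ref{conv} to decompose as $S_0 \sqcup \cdots \sqcup S_{N-2}$, where each $S_k$ has size $m_k$ and consists of vertices that, after elevation by $h_S$, sit at coheight $n_k$.

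The central difficulty is the nested analogue of the swapping problem from Theorem \ref{three}: because the elevation function couples the layers, the optimal $S_k$ depends on the choices of $S_0, \ldots, S_{k-1}$, so the minimization cannot be decoupled naively. My plan is to iteratively apply the predict-and-verify framework of Section \ref{framework2}, peeling off layers one at a time from the shallowest. Once $S_0, \ldots, S_{k-1}$ have been pinned down, the shift $\sigma^{h_S}$ reduces the analysis of $S_k$ to exactly the situation treated in Section \ref{framework2}, and the appropriate generalizations of Theorems \ref{existsnice}, \ref{prediction}, and \ref{existsnice_n} carry through once the notions of padding, stack, critical index, and minimal gap set are lifted to chains $C_0 \supseteq C_1 \supseteq \cdots \supseteq C_{N-2}$ of candidate sets.

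The hard part will be the combinatorial bookkeeping for this nested predict-and-verify: I must track how elevation shifts propagate across layers and verify that the pairing argument underlying Theorem \ref{existsnice_n} still pins down the minimizer when several layers vary simultaneously. Granting this, each vertex of coheight $n_{N-2}$ receives a unique ancestor chain up through coheight $n_0$, which in turn determines $\mathbf{x}_{(N-1)h}|_g$ for every vertex $g$, and the identity
\[
\mathbf{x}_{(N)h} = \prod_{v \in V(T)} x_{\mathbf{x}_{(N-1)h}|_v}
\]
assembles these local profiles into $\mathbf{x}_{(N)h}$, completing the induction.
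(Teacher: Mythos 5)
Your overall outline matches the paper's: generalize Theorem~\ref{three} to a condition of the form $\phi_{n}\bigl(\prod_{k} \prod_i \mathbf{x}_h|_{g_{k,i}}\bigr) x_n^m$, decompose the minimizing $S$ into $S_0 \sqcup \cdots \sqcup S_{N-2}$, and resolve the coupled minimization with predict-and-verify. Your closing identity $\mathbf{x}_{(N)h} = \prod_v x_{\mathbf{x}_{(N-1)h}|_v}$ is also exactly right. Where the proposal goes astray is in the mechanism for untangling the nested swapping problem, which is the entire substance of the proof.

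You say you will peel \emph{from the shallowest layer}, pinning down $S_0, \ldots, S_{k-1}$ and then reducing the analysis of $S_k$ to the Section~\ref{framework2} setting. But the swapping problem says precisely that the optimal $S_0$ in~(\ref{eq:threetermconv}) cannot be determined independently of the deeper $S_k$'s: the minimizer may use a non-minimal $S_0$ because the resulting stack for $S_1, \ldots$ is cheaper. So there is no stage at which $S_0, \ldots, S_{k-1}$ are ``pinned down'' before $S_k$ has been considered, and an outer-to-inner sweep does not terminate the circularity. The paper's proof runs in the opposite direction: with $S_0, \ldots, S_{x-4}$ held fixed it first predicts $S_{x-3}$ (of size one) and the size of $S_{x-2}$ via the Theorem~\ref{existsnice} machinery, then uses Remark~\ref{newordering} to induce an ordering on the candidates for $S_{x-3}$, then allows $S_{x-4}$ to vary and repeats, and so on downward to $S_0$. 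The nested prediction is only coherent because the Remark~\ref{newordering} ordering inherits the \emph{additive property} established in Theorem~\ref{existsnice_n} — the paper explicitly flags this as ``the essential reason why this argument works,'' and your proposal does not mention it. Your suggestion of lifting padding/stack/critical-index to ``chains $C_0 \supseteq C_1 \supseteq \cdots$ of candidate sets'' is also not quite right structurally: the candidate sets for different layers live at different elevated coheights and are not nested by inclusion. Without the additive-property lemma and the correct (inner-to-outer) order of prediction, the generalization does not go through as stated.
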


\begin{proof}
We can recursively perform something analogous to Theorem \ref{three} in order to reconstruct general $\mathbf{x}_{(N)h}$. Rather than just having $S_0$ and $S_1$, we also have $S_2$, $S_3$, up to $S_{x - 2}$. The expression we consider is
\begin{equation*}
\min_S \left( \left[ \phi_{n} \left( \prod_{0 \le x' \le x - 3} \prod_{g \in S_{x'}} \sigma^{h_S}(\mathbf{x}_h |_g) \right) x_{n}^{m} \right] \prod_{g \in S} \sigma^{h_S(g)} (\mathbf{x}_h |_g) \right).
\end{equation*}
Suppose that we have already determined $\mathbf{x}_{(x - 1)h}$, so we know the possibilities for $S_0, \dots, S_{x - 3}$. Suppose also that we are working inductively, so that we already know the positions of some of the candidates for $S_{x - 2}$.

Out of the candidates for $S_{x - 2}$ with undetermined position, let $v_i$ be the one with $i$th smallest coheight profile. Using an argument similar to Theorems \ref{existsnice} through \ref{existsnice_n}, we have that for fixed $S_0, \dots, S_{x - 4}$, we can find an $S_{x - 3}$ with $|S_{x - 3}| = 1$ and $|S_{x - 2}|$ such that the largest element in the predicted $S_{x - 2}$ is $v_1$.

Via Remark \ref{newordering}, we can extend this to an ordering of the candidates for $S_{x - 3}$. Now, out of the candidates for $S_{x - 3}$ with undetermined position, let $V_i$ be the $i$th smallest vertex under the above ordering.

Then, we allow $S_{x - 4}$ to vary. Using an argument similar to Theorems \ref{existsnice} through \ref{existsnice_n}, we have that for fixed $S_0, \dots, S_{x - 5}$, we can find an $S_{x - 4}$ with $|S_{x - 4}| = 1$ and $|S_{x - 3}|$ such that the predicted $S_{x - 3}$ has largest element $V_1$. (Note: the essential reason why this argument works is that the ordering of Remark \ref{newordering} has the additive property described in Theorem \ref{existsnice_n}.)

Via Remark \ref{newordering}, we can extend this to an ordering of the candidates for $S_{x - 4}$. Then, we allow $S_{x - 5}$ to vary, and so on.

The final collection $S_0, \dots, S_{x - 3}, |S_{x - 2}|$ we find is the one we try first, and whether our prediction is correct or not tells us whether $v_1 \in L_n(S_{x - 3})$. By the inductive hypothesis, we already knew whether $v_1 \in L_n(S_{x - 3} \setminus \{{V_1}^1\})$, so we now know whether $v_1 \in L_n(V_1)$.

Next, we do the same procedure for $V_2$, and we can determine whether $v_1 \in L_n(V_2)$. We continue like this to determine the location of $v_1$. This directly generalizes to general $v_i$.
\end{proof}

Finally, we prove our main theorem.

\begin{proof}[Proof of Theorem \ref{fin}]
By Theorem \ref{general}, we can reconstruct $\mathbf{x}_{(N)h}$ for any positive integer $N$. This will be enough to reconstruct $T$.

We invoke recursion on the number of layers in $T$.

\begin{itemize}
    \item If $T$ has 2 layers, we can reconstruct $T$ from $\mathbf{x}_{h}$, since it suffices to know the number of children of the root.
    \item Suppose that for some $n \ge 2$, the following is true: if $T$ has $n$ layers, then we can reconstruct $T$ from $\mathbf{x}_{(n - 1)h}$. We claim that if $T$ has $n + 1$ layers, then we can reconstruct $T$ from $\mathbf{x}_{(n)h}$. Note that the subtree induced by a child of the root has at most $n$ layers. Since knowing $\mathbf{x}_{(n)h}$ gives us $\mathbf{x}_{(n - 1)h}$ of each child of the root, we can reconstruct each child's induced subtree, whose connection with the root completes the reconstruction of $T$. \qedhere
\end{itemize}
\end{proof}

\section{Future directions} \label{future}

Hasebe and Tsujie actually proved that the strict order quasisymmetric function distinguishes not only rooted trees, but also $(\mathsf{N}, \bowtie)$-free posets, which is a class of posets that includes but is not limited to rooted trees \cite{hasebe2017order}. We are interested to see if an analogue of our formalization and/or procedure exists in this broader setting.

Awan and Bernardi define the quasisymmetric $B$-polynomial, which is a simultaneous generalization of the chromatic quasisymmetric function and the Tutte symmetric function \cite{awan2020tutte}. They pose a number of open questions about the invariant. Question 10.6 part (ii) is resolved by our result or equally by the result of Hasebe and Tsujie. We are curious whether parts (iii) and (iv) of the question are resolvable using a similar sampling method as this paper. In general, because our combinatorial approach significantly differs from the algebraic approaches of many other papers in algebraic combinatorics, there may be results that are only within reach via our method.

Another direction to explore is looking for situations similar to the swapping problem in Section \ref{framework2} and then applying the ``predict and verify'' strategy. The swapping problem can be stated in a more general context as the following:

Suppose we have a totally ordered abelian group $R$, and $A_i, B_i$ are multisets with elements from $R$. Let $a_i(n)$ be the sum of the $n$ least elements of $A_i \cup \bigcup_{j \ne i} B_j$, and let $\delta_i \in R$ be constants. Given $A_i$, $S = \bigcup B_i$ and $s_n = \min^1(a_i(n) + \delta_i)$ for $1 \le n \le \max^1(|A_i| + |S| - |B_i|)$, can we determine each individual $A_i$?

Our resolution to the problem applies in this general situation as well. Thus, any problem that reduces to this general situation can be solved with our method.

\section{Acknowledgements}

I would like to thank Prof.\ Andrew Blumberg, Yongyi Chen, Prof.\ Pavlo Pylyavskyy, and Dr.\ Shuhei Tsujie for their invaluable advice. Last but not least, I would like to thank Prof.\ Pavel Etingof, Dr.\ Slava Gerovitch, and Dr.\ Tanya Khovanova of MIT PRIMES for providing me with the research opportunity that inspired this project.

\printbibliography[heading=bibnumbered]

\end{document}